\def\expandafter\normalsize\expandafter{%
    \normalsize%
    \setlength\abovedisplayskip{5pt}%
    \setlength\belowdisplayskip{5pt}%
    \setlength\abovedisplayshortskip{-3pt}%
    \setlength\belowdisplayshortskip{3pt}%
}
\newtheorem{theorem}{Theorem}[section] 
\newtheorem{corollary}[theorem]{Corollary} 
\newtheorem{lemma}[theorem]{Lemma} 
\newtheorem{proposition}[theorem]{Proposition}
\newtheorem{conjecture}[theorem]{Conjecture}
\theoremstyle{definition}
\newtheorem{example}{Example}
\theoremstyle{remark}
\newtheorem*{remark}{Remark} 
\newcommand{\floor}[1]{\left\lfloor #1 \right\rfloor}
\newcommand\nnfootnote[1]{%
  \begin{NoHyper}
  \renewcommand\thefootnote{}\footnote{#1}%
  \addtocounter{footnote}{-1}%
  \end{NoHyper}
}
\DeclareMathOperator{\outdeg}{outdegree}
\newcommand\preceqdot{\mathrel{\ooalign{$\preceq$\cr
  \hidewidth\raise0.225ex\hbox{$\cdot\mkern0.5mu$}\cr}}}
\title{Chip-firing on the Lattice of Nonnegative Integer Points}
\author{Ryota Inagaki \and Tanya Khovanova \and Austin Luo}
\date{}
\begin{document}

\maketitle

\begin{abstract}
Chip-firing on a directed graph is a game in which chips, a discrete commodity, are placed on the vertices of the graph and are transferred between vertices. In this paper, we study a chip-firing game on the Hasse diagram of the lattice of nonnegative integer points on the plane, where we start with $2^n$ chips at the origin. When we fire a vertex $v$, we send one chip to each out-neighbor. We fire until we reach a stable configuration, a distribution of chips where no vertex can fire.

We study the intermediate firing configuration: a table that assigns to each vertex the total number of chips that pass through it. We prove that the nonzero entries of the stable configuration correspond to the odd entries of the intermediate configuration. The intermediate configuration consists of three parts: the top triangle, the midsection, and the bottom triangle. We describe properties of each part. We study properties of each row and the number of rows of the intermediate configuration. We also explore properties of the difference tables, which are tables of first differences of each row of the intermediate firing configuration.
\end{abstract}

\nnfootnote{\emph{2020 Mathematics Subject Classification}:  05A10, 11Y55, 05C57}
\nnfootnote{\emph{Key words and phrases:} Chip-Firing, Pascal's Triangle, Binomial Coefficients}

\section{Introduction}
Chip-firing is a game on a graph in which a discrete commodity, called chips, is placed on the vertices of the graph. A vertex $v$ fires when it has at least as many chips as neighbors and disperses a chip to each neighbor. In a directed graph, vertices can fire chips only along their out-edges. Chip-firing was initially studied as the Abelian Sandpile, which Bak et al.~\cite{PhysRevLett} and Dhar \cite{dhar1999abelian} explored; in this setting, a stack of sand disperses when it exceeds a certain height. The study of chip-firing on graphs originates from works including Anderson, Lovász, Shor, Spencer, Tardos, and Winograd \cite{zbMATH04135751} and Bj\"orner, Lovász, and Shor \cite{MR1120415}. Since those works were published, numerous variants of chip-firing have been studied, including Biggs’s Dollar Game\footnote{One can play a recreational version of this game at the website \cite{DollarGame}} \cite{MR1676732}, Guzmán and Klivans’s chip-firing game on invertible matrices \cite{MR3311336, MR3504984}, Hopkins, McConville, and Propp’s labeled chip-firing game on 1-dimensional lattices \cite{MR3691530}, Musiker and Nguyen's labeled chip-firing game on undirected binary trees \cite{MR4827886, Inagaki2025}, and the authors' labeled chip-firing on directed trees \cite{MR4887467, inagaki2025permutationbasedstrategieslabeledchipfiring, inagaki2025labeledchipfiringdirectedkary}.

Despite the abundance of work on chip-firing on lattices and trees, relatively little has been written about the chip-firing game on directed graphs that are not trees. Indeed, one aspect that complicates chip-firing games on these graphs is the fact that some chips that diverge from a vertex may reconverge at a different vertex later.

In this paper, we study the chip-firing game on the quadrant lattice graph, which is the Hasse diagram of poset $\mathbb{Z}_{\geq 0}^2$ with coordinate-wise partial order, i.e., $a \leq b$ if the $x$-coordinate of $a$ is less than or equal to that of $b$ and the $y$-coordinate of $a$ is less than or equal to that of $b$. It is one of the simplest and most familiar non-tree posets that come to mind. The quadrant lattice graph is shown in Figure~\ref{fig:quadrantgraph}. 

\begin{figure}[ht!]
    \centering
    \includegraphics[width=0.35\linewidth]{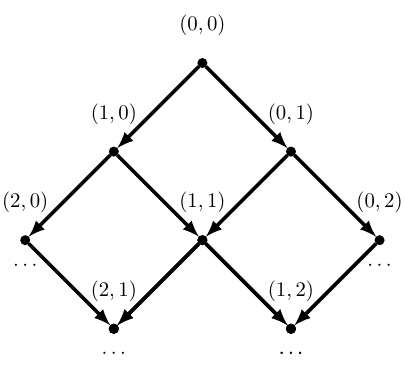}
    \caption{The quadrant lattice graph with labeled vertices.}
    \label{fig:quadrantgraph}
\end{figure}

We start with $2^n$ chips at the origin $(0, 0)$ and play the chip-firing game until no vertex can fire.

Now we describe the layout of our paper. 

In Section~\ref{sec:prelim}, we introduce preliminary definitions and facts that will be useful in this paper.

Then, in Section~\ref{sec:TotalFirings}, we express the total number of firings required to achieve the stable configuration through the distribution of chips in that configuration.

In Section~\ref{sec:IntermedFiringConfig}, we introduce the intermediate firing configuration, which describes the total number of chips passing through each vertex. We then prove that the nonzero entries of the stable configuration correspond to the odd entries in the intermediate configuration. We present an example where we start with $2^9$ chips and observe interesting structures in the resulting stable configuration.

In Section~\ref{sec:PascalTriangle}, we observe that the first $n+1$ rows of the intermediate configurations are scaled versions of rows of Pascal's Triangle.

In Section~\ref{sec:Med}, we prove that in each row of the intermediate configuration, the entries first strictly increase and later strictly decrease when read from left to right. Then we discuss the number of nonzero entries in each row of the intermediate configuration, which changes by $+1$ or $-1$ when moving to the next row. Afterwards, we give the upper bound on the number of nonzero rows.

In Section~\ref{sec:BottomTriangle}, we discuss the last rows of the intermediate stable configuration, whose nonzero entries form a triangle. These last rows are minimal in some sense and are explicitly described.

Afterwards, in Section~\ref{sec:Discussion}, we summarize our discoveries and show some plots. We also motivate studying the difference tables in the last section. We have extensive computational data that motivates our research. This data can be found in \cite{GithubData}.

Finally, in Section~\ref{sec:DiffTables}, we introduce difference tables, which are differences of consecutive entries in each row of the intermediate configuration, and describe some interesting properties.

\section{Preliminaries}\label{sec:prelim}

\subsection{Unlabeled Chip-Firing}

In the unlabeled chip-firing game on directed graphs, indistinguishable chips are put on the vertices of a directed graph $G = (V, E)$. Vertex $v$ can fire if and only if it has at least $\outdeg(v)$ chips. In other words, vertex $v$ sends one chip to each out-neighbor and consequently loses $\outdeg(v)$ chips.

A \textit{configuration} $\mathcal{C}$ is a distribution of chips over the vertices of a graph. When the firing process reaches a state in which no vertex can fire, the distribution of chips over the vertices is a \textit{stable configuration}.

A key property of unlabeled chip-firing on directed graphs is the following global confluence property, which is analogous to that of chip-firing on undirected graphs (which is similar to Theorem 2.2.2 of Klivans' 
 textbook \cite{klivans2018mathematics}).

\begin{theorem}[Theorem 1.1 of \cite{MR1203679}]\label{thm:confluence}
For a directed graph $G$ and initial configuration $\mathcal{C}$ of chips on the graph, the unlabeled chip-firing game will either run forever or end after the same number of moves and at the same stable configuration. Furthermore, the number of times each vertex fires is the same regardless of the sequence of firings taken in the game.
\end{theorem}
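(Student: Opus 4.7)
My plan is to prove this confluence result by the classical ``diamond lemma'' strategy for abstract rewriting systems, adapted to the directed chip-firing setting. The proof splits into a local commutativity statement, an exchange lemma that globalizes it, and a final monotonicity argument.

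First, I would establish local confluence: if two distinct vertices $v$ and $w$ are both fireable at configuration $\mathcal{C}$, then $vw$ and $wv$ are both legal firing sequences from $\mathcal{C}$ and reach the same configuration. The key observation is that firing $v$ removes $\outdeg(v)$ chips from $v$ and adds one chip to each out-neighbor of $v$, so the chip count at any vertex other than $v$ can only weakly increase. In particular, $w$ still has at least $\outdeg(w)$ chips after $v$ fires, so firing $w$ second is legal. The net change in chip counts is the componentwise sum of the two firing vectors, which is symmetric in $v$ and $w$, so both orderings produce the same configuration.

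Next, I would prove an exchange lemma: if $\sigma = v_1 v_2 \cdots v_k$ is a legal firing sequence from $\mathcal{C}$ and $w$ is any vertex fireable at $\mathcal{C}$, then one can rewrite $\sigma$ as a legal firing sequence starting with $w$ (if $w$ appears in $\sigma$) or append $w$ legally before any $v_i$ (if not). The crucial point is that until $w$ itself fires for the first time in $\sigma$, its chip count never decreases, since only firings of in-neighbors of $w$ affect it, and each such firing only adds chips. Hence $w$ is fireable at every intermediate configuration preceding its first occurrence in $\sigma$, and one can apply the local-confluence step repeatedly to bubble the firing of $w$ leftward to the front of $\sigma$.

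With the exchange lemma in hand, the rest is a standard induction. Given two legal sequences $\sigma$ and $\tau$ from $\mathcal{C}$ with $\sigma$ terminating at a stable configuration, let $w$ be the first vertex fired in $\tau$. Applying the exchange lemma to $\sigma$, we rewrite $\sigma$ so that its first step is also $w$; strip off this common first firing and iterate. This shows that the firing vector of $\tau$ is componentwise bounded above by that of $\sigma$. If $\tau$ also terminates, swapping the roles of $\sigma$ and $\tau$ yields equality of firing vectors, hence equality of length and of final configuration. If some legal sequence runs forever, the monotonicity argument prevents the existence of any terminating sequence, as the latter would bound the total number of firings of every vertex.

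The main technical obstacle is verifying the exchange lemma rigorously in the directed setting, in particular the case in which $w$ already appears in $\sigma$: one must control the chip counts of $w$ through a prefix that may contain many firings of its in-neighbors. This is handled by the monotonicity observation that non-firings of $w$ can only add chips to $w$, so fireability of $w$ is preserved along the entire prefix up to its first firing, allowing the local-confluence swap to be applied successively.
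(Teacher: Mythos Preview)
The paper does not give its own proof of this theorem: it is quoted as Theorem~1.1 of \cite{MR1203679} and used as a black box, so there is nothing in the paper to compare against.

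Your argument is the standard one and is correct. The local confluence step is fine because in the directed game, firing any vertex $v\neq w$ can only add chips at $w$ (namely when $w$ is an out-neighbor of $v$), so fireability of $w$ is preserved; this is exactly what drives both the local swap and the bubbling in your exchange lemma. One small point worth sharpening: in your induction, once $\sigma$ is assumed terminating and $w$ is fireable at $\mathcal{C}$, you should note explicitly that $w$ \emph{must} occur in $\sigma$---otherwise, by your monotonicity observation, $w$ would still be fireable at the terminal configuration of $\sigma$, contradicting stability---so the ``$w$ does not appear in $\sigma$'' branch of the exchange lemma is never invoked in that step. With that remark the inductive comparison of firing vectors goes through cleanly, and the dichotomy (either every legal play is infinite, or every legal play terminates with the same firing vector and hence the same stable configuration) follows.
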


\subsection{Our Setup}

We define the \textit{Hasse diagram of $\mathbb{Z}_{\geq 0}^2$} to be the graph whose vertex set is the set of integer points in the first quadrant, i.e., $\{(x, y): x, y \in \mathbb{Z}_{\geq 0}\}$ and whose edges are $\{(x, y) \to (x+1, y): x, y \in \mathbb{Z}_{\geq 0}\}$ and $\{(x, y) \to (x, y+1): x, y \in \mathbb{Z}_{\geq 0}\}$. We say that a vertex $v = (v_x, v_y)$ is in row $i$ if $v_x+v_y=i$. For vertices  $(a, b)$ and $(c, d)$ that are in the same row, we say that $(a, b)$ is \textit{left} of $(c, d)$ if $b-a < d-c$, or equivalently $b < d$.

Our chip-firing game proceeds on the graph described above. We call the vertex $(0,0)$ \textit{the root}.

In our chip-firing game, we denote the \textit{initial configuration} of chip-firing as placing of $2^n$ chips on the root. A vertex $v=(x, y)$ can \textit{fire} if and only if it has at least $2$ chips. When vertex $v$ \textit{fires}, it transfers a chip from itself to each of its $2$ children.

\begin{example} Figure~\ref{fig:exampleunlabel} shows the unlabeled chip-firing process with $4$ indistinguishable chips initially placed at the origin of the quadrant lattice, where the graph is rotated so edges are directed down.

\begin{figure}[ht!]
\centering
    \subfloat[\centering Initial configuration with $4$ chips]{{\includegraphics[width=0.25\linewidth]{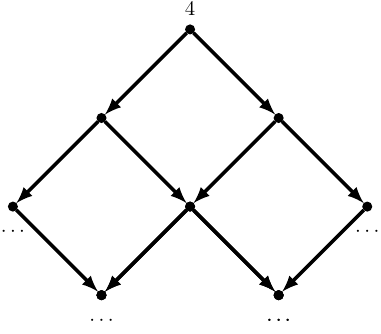} }}%
    \qquad
    \subfloat[\centering Configuration after firing once]{{\includegraphics[width=0.25\linewidth]{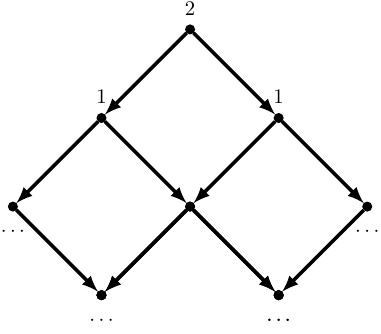} }}%
    \qquad
    \subfloat[\centering Configuration after firing twice]{{\includegraphics[width=0.25\linewidth]{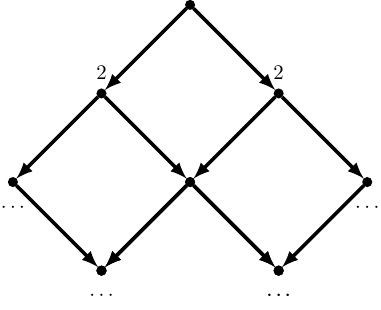} }}%
    \qquad
    \subfloat[\centering Configuration after firing four times]{{\includegraphics[width=0.25\linewidth]{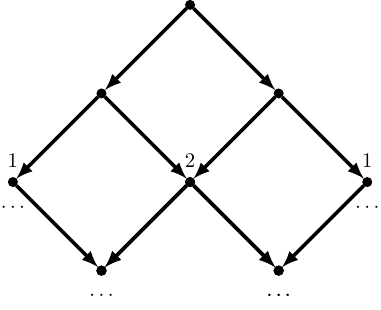} }}
    \qquad
    \subfloat[\centering Stable configuration]{{\includegraphics[width=0.25\linewidth]{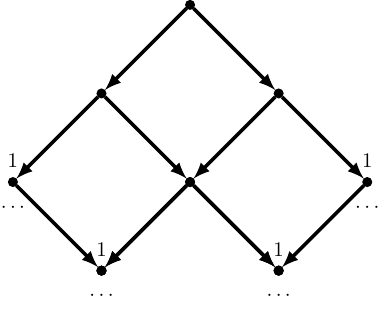} }}
    \caption{Example of unlabeled chip-firing on an infinite directed quadrant lattice}%
    \label{fig:exampleunlabel}
\end{figure}
\end{example}

We claim that if we start a chip-firing game with the initial configuration as above, it will always stabilize. Suppose, on the contrary, that we never reach a stable configuration. This means that the number of chips transferred from row $i$ to row $i+1$ must be the same from some row $N$ onward. It means that no chip is left behind, and every vertex on rows $N$ and later receives an even number of chips. Consider the leftmost vertex in row $N$; suppose it receives $a$ chips. Thus, the leftmost vertex in the next row receives $\floor{\frac{a}{2}}$ chips. As this quantity decreases, eventually the leftmost vertex in some row has to receive an odd number of chips. This is a contradiction.

We observe that, as long as we start with the same number of chips at the root, all firing processes lead to the same stable configuration. This is a result of Theorem~\ref{thm:confluence}.

\section{The Total Number of Firings}\label{sec:TotalFirings}

Let us denote by $T(n)$ the total number of firings when we start with $2^n$ chips at the origin. We know that, from the confluence property of chip-firing on directed graphs (Theorem~\ref{thm:confluence}), this number does not depend on the order of firings.

We calculated the first few values of this sequence starting from index 0 (the new sequence A389565 in the OEIS \cite{oeis}):
\[0,\ 1,\ 5,\ 15,\ 52,\ 163,\ 458,\ 1359,\ 4296,\ 12890,\ 38570.\]

We would like to introduce the distribution of the number of chips in the stable configuration as a function of $y-x$. In other words, we denote by $D_n(i)$ the number of chips in all the vertices $(x,y)$ in the stable configuration such that $y-x = i$ when we start with $2^n$ chips. We call this distribution \textit{the distance distribution}.

\begin{example}
\label{ex:n=4D}
    We get that the distance distribution $D_4$ is
    \[2,\ 1,\ 2,\ 3,\ 0,\ 3,\ 2,\ 1,\ 2.\] To see this, one can either do direct computation of the stable configuration or consider the table in Example~\ref{ex:n=4}, which is the table for the maximum number of chips at each vertex in the firing process for $n=4$. The odd values in bold correspond to vertices that have one chip in the stable configuration, since each firing removes two chips from the vertex that is fired. One can sum the number of bold vertices in each column to obtain the distance distribution $D_4$.
\end{example}

Note that the distance distribution is symmetric with the middle value zero. Consider the second raw moment $\mu_2'(n)$ of the distribution $D_n(i)$:
\[\mu_2'(n) = \sum_i i^2D_n(i).\]

\begin{proposition}
    We have the following connection:
    \[\mu_2'(n) = 2T(n).\]
\end{proposition}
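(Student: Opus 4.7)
The plan is to use a potential function argument. Define
\[
\Phi(\mathcal{C}) \;=\; \sum_{v=(x,y)} c_v \cdot (y-x)^2,
\]
where $c_v$ denotes the number of chips at vertex $v$ in a configuration $\mathcal{C}$. The key observation is that each single firing changes $\Phi$ by a constant amount. Specifically, when a vertex $v=(x,y)$ with $y-x=d$ fires, it loses two chips at distance $d$ and gains one chip each at $(x+1,y)$ and $(x,y+1)$, which sit on diagonals $d-1$ and $d+1$, respectively. The change in $\Phi$ is therefore
\[
(d-1)^2 + (d+1)^2 - 2d^2 = 2,
\]
independent of $v$. This is the step I would carry out first and it is the substantive computation of the proof.

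Next, I would telescope. The initial configuration has $2^n$ chips at the root $(0,0)$, so $\Phi_{\text{init}} = 2^n \cdot 0^2 = 0$. Let $\mathcal{C}_{\text{stab}}$ be the stable configuration, which is well-defined and reached after $T(n)$ firings by Theorem~\ref{thm:confluence}. Grouping chips in $\mathcal{C}_{\text{stab}}$ by the value of $y-x$ gives
\[
\Phi(\mathcal{C}_{\text{stab}}) \;=\; \sum_{v=(x,y)} c_v \cdot (y-x)^2 \;=\; \sum_i i^2 D_n(i) \;=\; \mu_2'(n).
\]
Since the total change in $\Phi$ over the course of the chip-firing process equals $2$ times the number of firings, we get $\mu_2'(n) - 0 = 2T(n)$, which is the desired identity.

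The only potential obstacle is ensuring that the potential-function argument is valid even though chip-firing processes on this infinite graph are a priori unbounded. However, stabilization was already established in the preliminaries, and the confluence theorem guarantees both that $T(n)$ is well-defined and that we may evaluate the telescoping sum along any firing sequence. No subtlety is needed beyond recording the per-firing increment and summing.
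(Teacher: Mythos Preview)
Your proof is correct and matches the paper's argument essentially line for line: both define the second-moment potential $\sum c_v(y-x)^2$, compute that a single firing increases it by $(d-1)^2+(d+1)^2-2d^2=2$, and telescope from the initial value $0$ to $\mu_2'(n)$.
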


\begin{proof}
    The second raw moment of the initial distribution is 0. We can consider the second raw moments of the resulting chip distribution after some number of firings. Suppose we have 2 chips at the vertex $(x,y)$. Before firing, they contribute $2(y-x)^2$ to the second moment. After the firing, we have one chip at $(x+1,y)$ and the other at $(x,y+1)$; they contribute
    \[(y-x-1)^2 + (y-x+1)^2 = 2(y-x)^2 + 2.\]
    Thus, each firing increases the second moment by 2. The result follows.
\end{proof}

\begin{example}
    From Example~\ref{ex:n=4D}, we get that
    \[T(4) = \frac{\mu_2'(4)}{2} = \frac{2\cdot 4^2 + 3^2 + 2\cdot 2^2 + 3\cdot 1 + 3\cdot 1 + 2\cdot 2^2+ 3^2 + 2\cdot 4^2}{2}  = 52.\]
\end{example}

Figure~\ref{fig:Distancedistributionfor15} shows the distance distribution for $n=15$.

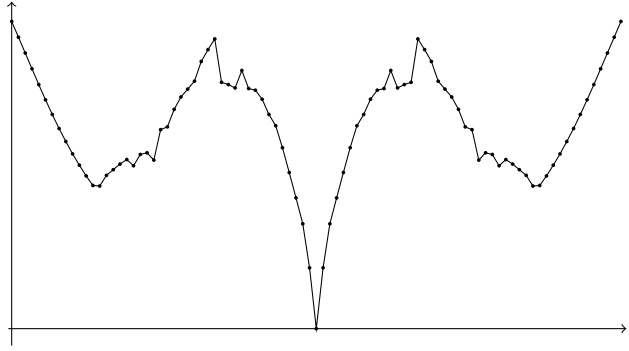
\begin{figure}[ht!]
    \centering
\begin{tikzpicture}[scale=0.5, x=0.18cm,y=0.015cm]
  \draw[->] (-0.5,0) -- (90.8,0);
  \draw[->] (0,-30) -- (0,580);



  \foreach \k/\v in {
    0/545,1/517,2/489,3/461,4/433,5/406,6/380,7/355,8/332,9/310,
    10/290,11/271,12/254,13/253,14/272,15/282,16/292,17/300,18/289,19/309,
    20/312,21/299,22/353,23/358,24/389,25/411,26/425,27/439,28/474,29/495,
    30/514,31/437,32/433,33/427,34/458,35/426,36/423,37/407,38/380,39/360,
    40/321,41/277,42/232,43/186,44/108,45/0,
    46/108,47/186,48/232,49/277,50/321,51/360,52/380,53/407,54/423,55/426,
    56/458,57/427,58/433,59/437,60/514,61/495,62/474,63/439,64/425,65/411,
    66/389,67/358,68/353,69/299,70/312,71/309,72/289,73/300,74/292,75/282,
    76/272,77/253,78/254,79/271,80/290,81/310,82/332,83/355,84/380,85/406,
    86/433,87/461,88/489,89/517,90/545
  }{
    \fill (\k,\v) circle (1.4pt);
  }

  \draw
    (0,545)--(1,517)--(2,489)--(3,461)--(4,433)--(5,406)--(6,380)--(7,355)--(8,332)--(9,310)--
    (10,290)--(11,271)--(12,254)--(13,253)--(14,272)--(15,282)--(16,292)--(17,300)--(18,289)--(19,309)--
    (20,312)--(21,299)--(22,353)--(23,358)--(24,389)--(25,411)--(26,425)--(27,439)--(28,474)--(29,495)--
    (30,514)--(31,437)--(32,433)--(33,427)--(34,458)--(35,426)--(36,423)--(37,407)--(38,380)--(39,360)--
    (40,321)--(41,277)--(42,232)--(43,186)--(44,108)--(45,0)--(46,108)--(47,186)--(48,232)--(49,277)--
    (50,321)--(51,360)--(52,380)--(53,407)--(54,423)--(55,426)--(56,458)--(57,427)--(58,433)--(59,437)--
    (60,514)--(61,495)--(62,474)--(63,439)--(64,425)--(65,411)--(66,389)--(67,358)--(68,353)--(69,299)--
    (70,312)--(71,309)--(72,289)--(73,300)--(74,292)--(75,282)--(76,272)--(77,253)--(78,254)--(79,271)--
    (80,290)--(81,310)--(82,332)--(83,355)--(84,380)--(85,406)--(86,433)--(87,461)--(88,489)--(89,517)--(90,545);
\end{tikzpicture}
    \caption{Distance distribution for $n=15$.}
\label{fig:Distancedistributionfor15}
\end{figure}

\section{The Intermediate Firing Configuration}\label{sec:IntermedFiringConfig}

\subsection{General facts}

For this section, we stabilize the chip-firing process on the quadrant lattice graph using the following strategy: starting from $i=0$ and for each $i$, in increasing order, we repeatedly fire vertices in row $i$ until no vertex in that row can fire anymore. In our study of chip-firing, ignoring the labels, it suffices to consider just this procedure, since Theorem~\ref{thm:confluence} tells us that, regardless of the sequence of firings used to reach a stable configuration, the number of times each vertex fires is the same, and the final stable configuration is the same.

We define $F(x,y)$ to be the number of chips at the vertex $(x,y)$ after all rows with an index lower than $x+y$ have finished firing and before the vertices in row $x+y$ start to fire. Note that if the vertex $(x,y)$ never receives chips, then $F(x,y) = 0$. For $(x, y) \in \mathbb{Z}^2$ not in the first quadrant, we assume that $F(x, y) = 0$. We call $F(x, y)$ the \textit{intermediate firing configuration}.

\begin{example}
\label{ex:n=4}
  Consider $n=4$. The nonzero entries of the intermediate configuration are as follows, with odd values in bold:
\begin{center}
  \begin{tabular}{ccccccccc}
        & & &  & 16  & & &  \\
      &  & & 8 &  &  8 & &  \\  
      & & 4 &  & 8 & &  4 & \\ 
      & 2 & & 6 & & 6 & & 2 \\ 
      \textbf{1} & & 4 & & 6 & & 4  &  & \textbf{1} \\ 
      & 2 & & \textbf{5} & & \textbf{5}  & & 2 \\ 
      \textbf{1} & & \textbf{3} & & 4 & & \textbf{3} & & \textbf{1} 
      \\  & \textbf{1} & & \textbf{3} & & \textbf{3} & & \textbf{1} \\ & & \textbf{1} & & 2 & & \textbf{1} \\ 
      & & & \textbf{1} & & \textbf{1} & & &
  \end{tabular}   
\end{center}
\end{example}

The intermediate firing configuration is connected to the stable configuration as follows.

\begin{theorem}
\label{thm:stableisodd}
If $F(x,y)$ is odd, then, in the stable configuration, vertex $(x, y)$ has one chip; otherwise, it has zero chips.
\end{theorem}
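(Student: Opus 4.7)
The plan is to exploit the row-by-row firing strategy that underlies the definition of $F(x,y)$, together with the fact that all edges in the Hasse diagram point upward from row $i$ to row $i+1$. By Theorem~\ref{thm:confluence}, the stable configuration is independent of the firing order, so we may analyze the specific strategy used to define $F$.

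First I would record two structural facts about the quadrant lattice graph. Within a single row no two vertices are joined by an edge, since both out-edges from $(x,y)$ land in row $x+y+1$; and no edge points downward. Together these imply (i) the firing order among vertices of a single row is irrelevant, since no firing in row $i$ changes any chip count in row $i$, and (ii) once the row-by-row strategy is finished processing row $x+y$, no chip can ever reach $(x,y)$ again. Consequently $F(x,y)$ — the chip count at $(x,y)$ immediately before row $x+y$ begins firing — equals the total number of chips that ever occupy $(x,y)$ throughout the entire firing process.

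The remaining step is a one-line arithmetic observation. Starting from $F(x,y)$ chips, vertex $(x,y)$ may fire as long as it holds at least $2$ chips, losing exactly $2$ chips per firing. Hence $(x,y)$ fires $\lfloor F(x,y)/2 \rfloor$ times and leaves $F(x,y) \bmod 2$ chips behind, which is $1$ if $F(x,y)$ is odd and $0$ otherwise. Since no further chips arrive at $(x,y)$, this value persists in the stable configuration, which by confluence is the stable configuration.

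The main obstacle is essentially cosmetic: the statement is nearly immediate once one is careful about the definition of $F$ and about what the directed structure of the graph rules out. The only point that genuinely needs to be checked is the claim that $F(x,y)$ captures the \emph{total} chip inflow at $(x,y)$, and this is precisely where the two structural observations above are used.
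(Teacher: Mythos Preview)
Your proof is correct and follows essentially the same approach as the paper's: use the row-by-row strategy defining $F$, observe that $(x,y)$ receives exactly $F(x,y)$ chips and then fires $\lfloor F(x,y)/2\rfloor$ times, leaving $F(x,y)\bmod 2$. Your version is slightly more explicit in justifying why no further chips can reach $(x,y)$ after its row is processed, which the paper leaves implicit in the definition of $F$.
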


\begin{proof}
    Note that in our strategy, vertex $(x, y)$ receives $F(x, y)$ chips and then fires until it is no longer possible. Since outdegree of each vertex is $2$, vertex $(x, y)$ fires $\floor{\frac{F(x, y)}{2}}$ times and loses $2\floor{\frac{F(x, y)}{2}}$ chips. Thus, in the stable configuration, it has $F(x, y) - 2\floor{\frac{F(x, y)}{2}}$ chips. The result immediately follows.
\end{proof}

\begin{proposition}
    The last row in the stable configuration is the same as the last nonzero row in $F$.
\end{proposition}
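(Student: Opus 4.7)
The plan is to pin down the last nonzero row of $F$, show that no vertex there ever fires, and then invoke Theorem~\ref{thm:stableisodd} to identify the entries with chips in the stable configuration.

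First, let $N$ denote the index of the last row on which $F$ has a nonzero entry (this exists by the stabilization argument in Section~\ref{sec:prelim}). The main step is to show that no vertex in row $N$ can fire under the row-by-row stabilization strategy. Suppose for contradiction that some $(x,y)$ with $x+y=N$ has $F(x,y) \geq 2$. Under the strategy described at the start of Section~\ref{sec:IntermedFiringConfig}, once the algorithm reaches row $N$, this vertex would fire, sending at least one chip to $(x+1,y)$ and $(x,y+1)$, both of which lie in row $N+1$. After all of row $N$ is processed, at least one of $(x+1,y)$ or $(x,y+1)$ would then have a positive number of chips when row $N+1$ begins firing, i.e., $F$ would be nonzero on row $N+1$, contradicting the maximality of $N$.

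Hence every entry of $F$ on row $N$ lies in $\{0,1\}$, so on row $N$ the nonzero entries of $F$ are precisely its odd entries. By Theorem~\ref{thm:stableisodd}, these are exactly the vertices in row $N$ that carry a chip in the stable configuration, and the stable configuration on row $N$ matches $F$ on row $N$. Moreover, for any row $k>N$ we have $F \equiv 0$, so Theorem~\ref{thm:stableisodd} gives zero chips in the stable configuration on such rows. Therefore row $N$ is the last nonzero row of the stable configuration, and it coincides entry-by-entry with the last nonzero row of $F$.

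The only subtle point is making sure that the row-by-row stabilization strategy is legitimate here; this is handled by Theorem~\ref{thm:confluence}, which guarantees that the stable configuration and per-vertex firing counts are independent of the order chosen, so reasoning with this convenient strategy suffices. No computation beyond the contradiction step is needed.
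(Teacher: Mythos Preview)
Your proof is correct and follows essentially the same approach as the paper: both arguments let $N$ be the index of the last nonzero row of $F$, observe that every entry of $F$ on row $N$ must lie in $\{0,1\}$ (else firing would populate row $N+1$), and then invoke Theorem~\ref{thm:stableisodd} to conclude. Your version is simply more explicit about the cases $k>N$ and about the role of confluence, but the underlying idea is identical.
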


\begin{proof}
Let $i$ be the last row in $F(x, y)$ with nonzero entries. The nonzero entries of $F$ on that row must be equal to $1$; otherwise, there is a vertex in the $i$th row that can fire, moving chips to the next row. It follows that the $i$th row of the stable configuration is the same as the last nonzero row of $F(x, y)$.
\end{proof}

\subsection{Symmetry}

Our chip-firing process is symmetric with respect to swapping $x$ and $y$. In other words, $y=x$ is the line of symmetry. We have the following proposition.

\begin{proposition}\label{prop:symmetry}
The intermediate configuration obtained is symmetric over the line $y=x$:
\[F(x,y) = F(y,x).\]
\end{proposition}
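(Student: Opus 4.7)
The plan is to prove the claim by strong induction on the row index $i = x+y$, using the fact that the row-by-row stabilization strategy is itself invariant under the swap $(x,y) \leftrightarrow (y,x)$. The base case $i=0$ is immediate: the only vertex in row $0$ is $(0,0)$, and $F(0,0) = 2^n$ is trivially symmetric. For the inductive step, I would first derive a clean recurrence for $F$ in terms of the previous row, then apply the inductive hypothesis directly.

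Specifically, I would show that for $x+y = i \geq 1$,
\[
F(x,y) = \floor{\frac{F(x-1,y)}{2}} + \floor{\frac{F(x,y-1)}{2}},
\]
with the convention $F(-1, y) = F(x, -1) = 0$. The justification is that, under the given strategy, by the time we begin firing row $i$, every vertex $(a,b)$ in row $i-1$ has received a total of $F(a,b)$ chips and then has fired $\floor{F(a,b)/2}$ times (because firings reduce the count by $2$ until fewer than $2$ remain). Each such firing contributes one chip to $(a+1,b)$ and one to $(a,b+1)$. Since chips can only travel from row $j$ to row $j+1$ in a single firing, the only chips ever deposited at $(x,y)$ come from firings of $(x-1,y)$ and $(x,y-1)$ in row $i-1$, which gives the recurrence.

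With the recurrence in hand, the induction closes easily. Assuming $F(a,b) = F(b,a)$ for all vertices $(a,b)$ in row $i-1$, we compute
\[
F(y,x) = \floor{\frac{F(y-1,x)}{2}} + \floor{\frac{F(y,x-1)}{2}} = \floor{\frac{F(x,y-1)}{2}} + \floor{\frac{F(x-1,y)}{2}} = F(x,y),
\]
where the middle equality uses the inductive hypothesis applied to the two neighbors in row $i-1$. This completes the induction.

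The main conceptual point — and the only step that requires a small amount of care — is justifying the recurrence, since one must verify both that $(x,y)$ receives no chips from vertices outside row $i-1$ and that, in our prescribed strategy, the number of firings at a vertex $(a,b)$ in row $i-1$ equals exactly $\floor{F(a,b)/2}$ by the time row $i-1$ is completed. Both facts follow from the definition of the graph (edges only go up by one in the row index) and the definition of $F$ (we stabilize each row fully before moving on). Once the recurrence is established, the symmetry is an automatic consequence of the symmetry of the initial datum $F(0,0) = 2^n$ together with the symmetric form of the recurrence itself.
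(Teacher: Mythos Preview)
Your proof is correct. In fact, the paper does not give a formal proof of this proposition at all: it simply remarks, immediately before the statement, that ``our chip-firing process is symmetric with respect to swapping $x$ and $y$,'' and treats the proposition as a direct consequence. Your argument makes this precise by first establishing the recurrence
\[
F(x,y) = \floor{\frac{F(x-1,y)}{2}} + \floor{\frac{F(x,y-1)}{2}}
\]
(which the paper uses freely elsewhere, e.g., in the proof of Proposition~\ref{prop:evenCentre}, without ever stating it explicitly) and then running a clean induction on the row index. So your approach and the paper's are the same in spirit---both rest on the $(x,y)\leftrightarrow(y,x)$ symmetry of the setup---but yours supplies the details the paper omits.
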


In particular, we have the following result.

\begin{proposition}
\label{prop:evenCentre}
    Let $x \geq 0$ and $n \geq 1$, then $F(x, x)$ is even.
\end{proposition}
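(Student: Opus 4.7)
The plan is to split into the base case $x=0$ and the inductive case $x \geq 1$, in both cases using the symmetry statement in Proposition~\ref{prop:symmetry} together with the row-by-row firing strategy described at the start of Section~\ref{sec:IntermedFiringConfig}.

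For the base case $x=0$ we simply have $F(0,0)=2^n$, which is even whenever $n\geq 1$. For $x\geq 1$, the key observation is that under the chosen firing strategy the vertex $(x,x)$ can only receive chips from its two in-neighbors $(x-1,x)$ and $(x,x-1)$, and these chips are deposited when those vertices fire (after they themselves have finished receiving all their chips, by row order). Since each vertex has out-degree $2$, vertex $(x-1,x)$ fires exactly $\lfloor F(x-1,x)/2\rfloor$ times and sends one chip to $(x,x)$ each time; similarly for $(x,x-1)$. Hence
\[
F(x,x) \;=\; \left\lfloor \frac{F(x-1,x)}{2} \right\rfloor + \left\lfloor \frac{F(x,x-1)}{2} \right\rfloor.
\]

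Now I apply Proposition~\ref{prop:symmetry} to conclude $F(x-1,x) = F(x,x-1)$, so the two floor terms are equal and their sum equals $2\lfloor F(x-1,x)/2\rfloor$, which is manifestly even. Combined with the base case, this finishes the proof.

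I do not expect any genuine obstacle here; the only subtlety is making sure the identity $F(x,x) = \lfloor F(x-1,x)/2\rfloor + \lfloor F(x,x-1)/2\rfloor$ is justified precisely from the firing strategy, namely that by the time we start firing row $2x$, rows with smaller index (including row $2x-1$) are fully stabilized, so $(x,x)$ has received exactly the chips sent by $(x-1,x)$ and $(x,x-1)$ during their firings in row $2x-1$.
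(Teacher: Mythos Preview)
Your proof is correct and follows essentially the same approach as the paper: handle $x=0$ directly via $F(0,0)=2^n$, and for $x\geq 1$ combine the firing recurrence $F(x,x)=\lfloor F(x-1,x)/2\rfloor+\lfloor F(x,x-1)/2\rfloor$ with the symmetry $F(x-1,x)=F(x,x-1)$ to conclude the sum is $2\lfloor F(x-1,x)/2\rfloor$. If anything, you give a slightly more explicit justification of the recurrence than the paper does.
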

\begin{proof}
    First note $F(0, 0) = 2^n$, which is even because $n \geq 1$. For the rest of the proof, assume that $x \geq 1$.

    We observe that $F(x, x)  = \floor{\frac{F(x, x-1)}{2}} + \floor{\frac{F(x-1, x)}{2}} = 2\floor{\frac{F(x, x-1)}{2}}$ with the last equality resulting from Proposition~\ref{prop:symmetry}, which implies $F(x, x-1) = F(x-1, x)$. This completes the proof.
\end{proof}

The final stable configuration is symmetric with respect to the line $y=x$ too. We have the following corollary from Theorem~\ref{thm:stableisodd} and Proposition~\ref{prop:evenCentre}.

\begin{corollary}
    No chip in the stable configuration ends up on the line $y=x$. 
\end{corollary}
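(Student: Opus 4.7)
The plan is to deduce the corollary as a direct chaining of the two results cited immediately before it, namely Theorem~\ref{thm:stableisodd} and Proposition~\ref{prop:evenCentre}. Under the standing assumption $n \geq 1$ (which is the meaningful range here, since for $n=0$ the single chip at the origin never moves and trivially sits on the diagonal), Proposition~\ref{prop:evenCentre} already supplies exactly the parity information needed: every diagonal value $F(x,x)$ is even.

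Concretely, I would fix an arbitrary diagonal vertex $(x,x)$ with $x \geq 0$ and observe via Proposition~\ref{prop:evenCentre} that $F(x,x)$ is even. Then I would invoke Theorem~\ref{thm:stableisodd}, which says that the stable-configuration occupancy at a vertex $(x,y)$ equals $1$ if $F(x,y)$ is odd and $0$ otherwise. Setting $y=x$, the evenness of $F(x,x)$ forces the stable occupancy at $(x,x)$ to be zero. Letting $x$ range over all nonnegative integers completes the argument.

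The main obstacle is essentially nonexistent, since both ingredients are already proved; the only bookkeeping point worth flagging is that Proposition~\ref{prop:evenCentre} genuinely requires $n \geq 1$ in order to cover the base case $x=0$ (where $F(0,0) = 2^n$ is even precisely because $n \geq 1$). With this caveat noted, the corollary is a one-line combination of the two preceding results, and no further combinatorial work is required.
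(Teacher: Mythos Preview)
Your proposal is correct and matches the paper's approach exactly: the corollary is stated immediately after Theorem~\ref{thm:stableisodd} and Proposition~\ref{prop:evenCentre} precisely because it is the direct combination of those two results, and your write-up (including the $n\ge 1$ caveat) spells this out faithfully.
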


\subsection{A Computed Example and Observations}

Figure~\ref{fig:stableconfign=9} represents the stable configuration for $n=9$. It is displayed horizontally to better fit on the page. Unfilled dots represent even nonzero points in the intermediate configuration. Filled dots correspond to odd entries, and also the points where there is 1 in the stable configuration. The starting vertex is the left-most vertex.

\begin{figure}[ht!]
    \centering
\includegraphics[width=0.95\linewidth]{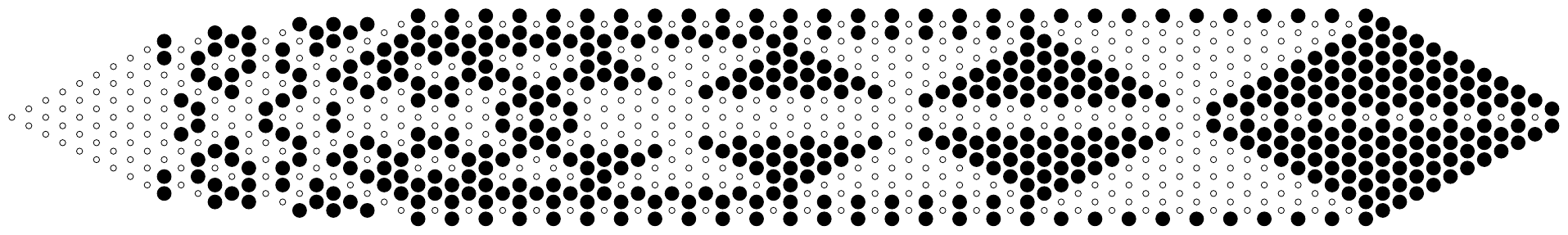}
    \caption{Representation of the stable configuration for $n=9$. Filled, bold dots represent positions that have a single chip. Unfilled dots represent positions that do not have any chips.}
\label{fig:stableconfign=9}
\end{figure}

We see that there are no unfilled dots to the right of the rightmost black dots.

Figure~\ref{fig:stableconfign=9} illustrates numerous symmetries and patterns that we see in intermediate configurations; they guide our discussions later. We observe the following from Figure~\ref{fig:stableconfign=9}:
\begin{itemize}
    \item We have a left triangle only consisting of unfilled points.
    \item Then the structure's width increases a bit, and after that, we have a long rectangle with black dots along its border. The rectangle is the widest part of the shape.
    \item The right triangle is all black except for the center line.
    \item The positions of black dots seem to be more chaotic on the left and more regular on the right.
\end{itemize}

\section{Pascal's Triangle and the Intermediate Firing Configuration}\label{sec:PascalTriangle}

When we start with $2^n$ chips, the top $n$ rows are especially easy to describe. As we can see in the next proposition, for each $i \in \{0, 1, \dots, n\}$, the $i$th row of Pascal's triangle, scaled by $2^{n-i}$, appears as row $i$ of the intermediate configuration.

\begin{proposition}\label{prop:PascalTriangle}
If we start with $2^n$ chips at the root, then for each $i \in \{0, 1, \dots, n\}$, the following is an intermediate configuration on row $i$. For $x \in 
 \{0, 1, \dots, i\}$, vertex $(x, i-x)$ has $2^{n-i} \binom{i}{x}$ chips:
\[F(x,i-x) = 2^{n-i} \binom{i}{x}.\]
\end{proposition}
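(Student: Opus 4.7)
The plan is to prove this by induction on the row index $i$, using Pascal's identity in the inductive step. The base case $i=0$ is immediate: under our strategy, row $0$ has not yet fired, so $F(0,0) = 2^n = 2^{n-0}\binom{0}{0}$, matching the formula.

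For the inductive step, suppose that for some $i < n$ we have $F(x, i-x) = 2^{n-i}\binom{i}{x}$ for all $x \in \{0, 1, \dots, i\}$. The crucial observation is that since $i < n$, we have $n - i \geq 1$, so every entry in row $i$ is even. Therefore, when we process row $i$ under the strategy defined in Section~\ref{sec:IntermedFiringConfig}, each vertex $(x, i-x)$ fires exactly $F(x,i-x)/2 = 2^{n-i-1}\binom{i}{x}$ times with no leftover chips, and sends that many chips to each of its two out-neighbors $(x+1, i-x)$ and $(x, i-x+1)$. In particular, no chips from row $i$ remain when row $i+1$ is about to be processed.

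Next, I would compute $F(x, i+1-x)$ for $x \in \{0, 1, \dots, i+1\}$. The vertex $(x, i+1-x)$ in row $i+1$ has exactly two in-neighbors in row $i$, namely $(x-1, i+1-x)$ and $(x, i-x)$ (using the convention $F(x,y)=0$ if $(x,y)$ lies outside the first quadrant, which corresponds to the cases $x=0$ or $x=i+1$). Summing the contributions gives
\[
F(x, i+1-x) = 2^{n-i-1}\binom{i}{x-1} + 2^{n-i-1}\binom{i}{x} = 2^{n-(i+1)}\binom{i+1}{x}
\]
by Pascal's identity $\binom{i}{x-1} + \binom{i}{x} = \binom{i+1}{x}$, using $\binom{i}{-1} = \binom{i}{i+1} = 0$ at the boundary.

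I do not expect any serious obstacles. The parity condition $n - i \geq 1$ is exactly what guarantees that the firing is ``clean,'' in the sense that every vertex in row $i$ has an even count and transfers half of its chips intact to each child; without this, floor functions would complicate the recurrence. Handling the boundary cases $x = 0$ and $x = i+1$ is the only minor technical point, but it is absorbed automatically by the standard convention on binomial coefficients. Note also that this argument stops working at $i = n$: row $n$ itself has entries $\binom{n}{x}$, which can be odd, and the subsequent rows no longer follow a clean binomial pattern, consistent with the scope of the proposition.
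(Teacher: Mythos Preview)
Your proof is correct and follows essentially the same approach as the paper's: induction on the row index $i$, with Pascal's identity providing the inductive step. The only cosmetic difference is that you absorb the boundary cases $x=0$ and $x=i+1$ into a single formula via the convention $\binom{i}{-1}=\binom{i}{i+1}=0$, whereas the paper treats them as separate cases.
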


\begin{proof}
    We use induction. Consider the base case $i=0$. In the initial configuration of the chip-firing game, there are $2^n = 2^{n-i}$ chips at vertex $(0, 0)$ of row $0$, which matches our statement. 

    For our inductive step, we assume that for row $i \le n$, vertex $(x, i-x)$ has $2^{n-i} \binom{i}{x}$ chips in the intermediate configuration. 

    Consider vertex $(x, i+1-x)$ on row $i+1$. 
    
    First, suppose that $0 < x < i+1$. It receives $2^{n-i-1}\binom{i}{x}$ chips from vertex $(x, i-x)$ and $2^{n-i-1}\binom{i}{x-1}$ chips from $(x-1, i+1-x)$. Summing this up, the vertex gets the total of $2^{n-i-1}(\binom{i}{x} + \binom{i}{x-1}) = 2^{n-i-1}\binom{i+1}{x}$ chips.

    Now suppose that $x = i+1$. Here, we find that the vertex $(x, i+1-x) = (i+1, 0)$ receives exactly $\frac{1}{2} \cdot 2^{n-i}\binom{i}{0} = 2^{n-i-1}\binom{i+1}{i+1}$ chips from $(i, 0)$ and no other chips, as the in-degree of $(x, i+1-x)$ is only $1$. The case $x=0$ is resolved in a symmetric manner. This concludes the inductive step.
    \end{proof}

In terms of parameters $(x,y)$, vertex $(x,y)$ with $x+y \le n$ has $2^{n-x-y} \binom{x+y}{x}$ chips.

\begin{corollary}
    In the final stable configuration, the closest row to the root that contains chips is the $n$th row. 
\end{corollary}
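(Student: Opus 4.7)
The plan is to combine Proposition~\ref{prop:PascalTriangle} with Theorem~\ref{thm:stableisodd}. The corollary has two components: (a) no chip lies in rows $0,1,\ldots,n-1$ of the stable configuration, and (b) at least one chip lies in row $n$. Both follow very directly from the explicit formula $F(x,i-x) = 2^{n-i}\binom{i}{x}$ for $0 \le i \le n$, once we translate ``chip in stable configuration'' into ``odd entry in $F$'' via Theorem~\ref{thm:stableisodd}.

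For part (a), I would observe that if $i < n$, then $n - i \ge 1$, so the factor $2^{n-i}$ in $F(x, i-x) = 2^{n-i}\binom{i}{x}$ is even. Hence every entry of $F$ on row $i$ is even, and Theorem~\ref{thm:stableisodd} then gives zero chips at each vertex of row $i$ in the stable configuration. This rules out every row strictly closer to the root than row $n$.

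For part (b), I would evaluate $F$ at a single vertex on row $n$: taking $i = n$ and $x = 0$ in Proposition~\ref{prop:PascalTriangle} gives $F(0, n) = 2^{0}\binom{n}{0} = 1$, which is odd. By Theorem~\ref{thm:stableisodd}, the vertex $(0, n)$ carries a chip in the stable configuration, so row $n$ is nonempty.

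There is essentially no obstacle here; the corollary is a direct consequence of the two preceding results, and all the real work has already been done in proving Proposition~\ref{prop:PascalTriangle}. The only minor care required is making sure the indexing convention of ``row'' is applied consistently (rows indexed by $x + y$) and noting that the hypothesis $n \ge 1$ implicit in the setup is what ensures $2^{n-i}$ is genuinely even for $i < n$; the case $n = 0$ is trivial since then row $0$ already contains the single initial chip.
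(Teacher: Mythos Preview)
Your proof is correct and follows essentially the same approach as the paper: both arguments use Proposition~\ref{prop:PascalTriangle} to see that rows $i<n$ have all entries divisible by $2^{n-i}$ (hence even), and that row $n$ begins with $F(0,n)=\binom{n}{0}=1$, then invoke Theorem~\ref{thm:stableisodd} to translate parity into the presence or absence of chips.
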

    
\begin{proof}
     From Proposition~\ref{prop:PascalTriangle}, we know that in the intermediate configuration, the number of chips at each vertex on row $m < n$ is divisible by $2^{n-m}$, implying that row $m$ does not have any chips in the stable configuration. However, row $n$ equals the $n$th row of the Pascal triangle; in particular, it starts with 1, implying by Theorem~\ref{thm:stableisodd} that there are chips in row $n$ of the stable configuration.
\end{proof}

This corollary explains the left triangle from Figure~\ref{fig:stableconfign=9} consisting only of unfilled points.

\section{The Middle of the Intermediate Firing Configuration}\label{sec:Med}

For the middle of the intermediate firing configuration table $F(x, y)$, it is hard to write the entries in a clean formula. After row $n$, the rows are no longer scaled versions of those of Pascal's triangle. However, we find and establish here several relations between the entries of each row of $F(x, y)$.

\subsection{General Structure of Rows}

In the next proposition, we show that, in the left part of the intermediate configuration, the row entries increase by at least two, except at the very end, when the second entry is exactly in the middle, then the increase might be only by one.

\begin{proposition}\label{prop:IncreaseUntilMiddle}
    Each row of $F(x,y)$, starting with the leftmost nonzero entry, increases until the middle. Moreover, if $F(x+1,y-1) > 0$, and $y < x$, then
    \begin{equation}F(x, y) - F(x+1, y-1) \ge 2.\end{equation}
    If $y = x$, then
    \begin{equation}F(x, y) - F(x+1, y-1) \ge 1.\end{equation}
    By symmetry, the entries decrease in the right half.
\end{proposition}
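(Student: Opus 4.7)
The plan is to prove both inequalities by induction on the row index $i = x + y$, using the recursion
\[F(x, y) = \lfloor F(x-1, y)/2 \rfloor + \lfloor F(x, y-1)/2 \rfloor\]
valid for $i \ge 1$. Subtracting the analogous expression for $F(x+1, y-1)$, the common summand $\lfloor F(x, y-1)/2 \rfloor$ cancels, so
\[F(x, y) - F(x+1, y-1) = \lfloor F(x-1, y)/2 \rfloor - \lfloor F(x+1, y-2)/2 \rfloor.\]
The horizontal difference in row $i$ is thus controlled by two entries two apart in row $i-1$, and the inductive hypothesis transfers upward via the elementary floor fact that $a - b \ge 2$ implies $\lfloor a/2 \rfloor - \lfloor b/2 \rfloor \ge 1$, while $a - b \ge 4$ implies $\lfloor a/2 \rfloor - \lfloor b/2 \rfloor \ge 2$.

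For the base cases I would use rows $0, 1, \ldots, n$, where Proposition~\ref{prop:PascalTriangle} gives the closed form $F(x, i - x) = 2^{n-i} \binom{i}{x}$. Both bounds then follow by inspection from the unimodality of Pascal's rows, with the factor $2^{n-i} \ge 1$ supplying the required magnitudes.

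For the inductive step at rows $i > n$, I split into three subcases according to the position of $(x, y)$ in row $i$. When $y \le x - 2$, both $(x-1, y)$ and $(x+1, y-2)$ lie strictly left of the diagonal in row $i - 1$; two applications of the inductive hypothesis for consecutive left-half steps give $F(x-1, y) - F(x+1, y-2) \ge 2 + 2 = 4$, and floor arithmetic yields $\ge 2$ as required. When $y = x$ (the middle), symmetry (Proposition~\ref{prop:symmetry}) forces $F(x-1, x) = F(x, x-1)$, and the inductive hypothesis applied to the left-half step in row $i - 1$ gives $F(x, x-1) - F(x+1, x-2) \ge 2$, translating into a floor difference of $\ge 1$, which matches the center-step bound exactly.

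The borderline case $y = x - 1$ is the main obstacle, and the reason Proposition~\ref{prop:evenCentre} appears in the preceding subsection. Here $(x-1, y) = (x-1, x-1)$ sits exactly on the diagonal of row $i - 1$, so the inductive hypothesis delivers only the center bound $F(x-1, x-1) - F(x, x-2) \ge 1$ together with the subsequent left-half bound $F(x, x-2) - F(x+1, x-3) \ge 2$, for a total of $3$. A raw gap of $3$ would normally promote only to a floor gap of $1$, which is not enough. The rescue is precisely that $F(x-1, x-1)$ is even by Proposition~\ref{prop:evenCentre}, so $\lfloor F(x-1, x-1)/2 \rfloor$ equals $F(x-1, x-1)/2$ exactly, and the floor difference then becomes an integer bounded below by $3/2$, hence at least $2$. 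Finally, I would tidy up the boundary situations in which one of the relevant row-$(i-1)$ entries vanishes, using the hypothesis $F(x+1, y-1) > 0$ and the recursion to pin down the nonzero contribution directly.
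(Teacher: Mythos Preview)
Your approach is essentially identical to the paper's: the same induction on the row index, the same cancellation identity $F(x,y)-F(x+1,y-1)=\lfloor F(x-1,y)/2\rfloor-\lfloor F(x+1,y-2)/2\rfloor$, the same three-way case split on $y\le x-2$, $y=x-1$, $y=x$, and the same use of Proposition~\ref{prop:evenCentre} to upgrade the gap of $3$ to a floor-gap of $2$ in the borderline case. The only execution differences are that the paper takes row $0$ alone as the (vacuous) base case---simpler than your rows $0,\dots,n$, where your ``by inspection'' at $i=n$ needs the extra fact that $\binom{n}{x}-\binom{n}{x+1}\ge 2$ for $n/2<x\le n-1$ rather than just unimodality---and that the paper handles the vanishing-entry boundary (your deferred ``tidy up'') explicitly up front by observing that $F(x+1,y-1)>0$ forces $F(x,y-1)\ge 2$ whenever $F(x+1,y-2)=0$.
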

\begin{proof}
We prove this via induction. For row $i=0$, the statement is true.

We now prove the inductive step. Suppose the statement is true for row $i-1$. Consider two points $(x+1,y-1)$ and $(x,y)$ on row $i$, where function $F$ is nonzero. Given that $F(x+1,y-1) > 0$, it follows that $F(x, y-1) > 0$. If $F(x+1, y-2) = 0$, then $F(x, y-1) > 1$ for $F(x+1,y-1)$ to get any chips. In any case, $F(x, y-1) - F(x+1, y-2) \ge 2$.

Suppose $y < x-1$, then $F(x-1, y)$ is to the left of the middle, then by the induction hypothesis $ F(x-1, y) - F(x, y-1) \ge 2$. Combining this with $F(x, y-1) - F(x+1, y-2) \ge 2$, we deduce that $F(x-1, y) - F(x+1, y-2) \ge 4$, implying that
\[F(x, y) - F(x+1, y-1) = \floor{\frac{F(x-1, y)}{2}} - \floor{\frac{F(x+1, y-2)}{2}} \ge 2,\]
finishing the induction step.

Now, suppose $F(x-1, y)$ is in the middle, or equivalently, $y = x-1$. Then the inductive hypothesis implies $F(x-1, y)- F(x, y-1) \ge 1$. Combining this with $F(x, y-1) - F(x+1, y-2) \ge 2$, we obtain $F(x-1, y) - F(x+1, y-2) \ge 3$. Adding the fact from Proposition~\ref{prop:evenCentre} that $F(x-1, y)$ is even, we obtain, again, 
\[F(x, y) - F(x+1, y-1) = \floor{\frac{F(x-1, y)}{2}} - \floor{\frac{F(x+1, y-2)}{2}} \ge 2.\]

Now, we prove the last case that if $y = x$, then $F(x, y) - F(x+1, y-1) \ge 1$.

If $x=y$, then $F(x, y) = \floor{\frac{F(x-1, y)}{2}} + \floor{\frac{F(x, y-1)}{2}} = 2\floor{\frac{F(x, y-1)}{2}}$. Note that $F(x+1, y-1) = \floor{\frac{F(x+1, y-2)}{2}} + \floor{\frac{F(x, y-1)}{2}}$. Since both $\floor{\frac{F(x+1, y-2)}{2}}$ and $\floor{\frac{F(x, y-1)}{2}}$ are to the left of the middle, the inductive hypothesis implies the $F(x, y-1) - F(x+1, y-2) \ge 2$ and, consequently,
\[F(x ,y) - F(x+1, y-1) = \floor{\frac{F(x, y-1)}{2}} - \floor{\frac{F(x+1, y-2)}{2}} \ge 1.\]
By symmetry of entries in $F(x,y)$, we get the final claim, concluding the proof.
\end{proof}

\subsection{Row length}

Now we discuss the lengths of the rows, which is the number of nonzero entries in the row. We describe an example with a particular $n = 9$ to match Figure~\ref{fig:stableconfign=9}. 

\begin{example}
For $n=9$, in the top triangle, the row length starts with 1 and increases by 1 until it reaches $n+1 = 10$. This is our top triangle. Then, we have $13$ rows where alternating terms slowly and weakly increase:
\[9,\ 10,\ 11,\ 10,\ 11,\ 10,\ 11,\ 12,\ 11,\ 12,\ 11,\ 12,\ 11.\]
This is the top of the middle part. Then, we have our rectangular shape with 57 rows that alternate between 12 and 13, starting with 12. In the bottom triangle, the row length decreases by 1, from 13 to 2.
The total number of rows is $10+13+57+12 = 92$.\end{example}

In the example above, we see that the numbers of elements in two consecutive rows differ by 1.

\begin{proposition}
For any row $i$ with at least one nonzero entry, the number of nonzero elements in row $i$ of $F(x, y)$ differs from the number of nonzero elements in row $i+1$ of $F(x, y)$ by exactly $+1$ or $-1$.
\end{proposition}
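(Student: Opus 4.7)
The plan is to describe the nonzero range of each row explicitly and track how it changes when we pass to the next row. By Proposition~\ref{prop:IncreaseUntilMiddle} the nonzero entries of row $i$ form a contiguous block, and by Proposition~\ref{prop:symmetry} this block is symmetric about $x = i/2$; write it as $\{\alpha_i, \alpha_i + 1, \dots, \beta_i\}$, so that $\alpha_i + \beta_i = i$ and the row length equals $\beta_i - \alpha_i + 1$. A crucial observation drawn from Proposition~\ref{prop:IncreaseUntilMiddle} is that, since consecutive entries in the strictly increasing portion differ by at least $2$, the only positions in the nonzero range whose value can equal $1$ are the two endpoints. Denoting their common value by $a_i := F(\alpha_i, i - \alpha_i) = F(\beta_i, i - \beta_i)$, a vertex on row $i$ fires (and so transfers chips to row $i+1$) exactly when its value is at least $2$.

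The argument then splits into two cases. Suppose first that $a_i \geq 2$; then every nonzero vertex in row $i$ fires at least once. For each $x$, the vertex $(x, i+1 - x)$ on row $i+1$ receives chips only from its two parents $(x-1, i+1-x)$ and $(x, i-x)$ on row $i$. Inspecting each $x$ in turn, the new nonzero range is exactly $[\alpha_i, \beta_i + 1]$: the extremes $x = \alpha_i$ and $x = \beta_i + 1$ each have a single firing parent contributing at least one chip; every position strictly between them has at least one firing parent; and outside this range both parents vanish. The length increases by $1$. Now suppose $a_i = 1$, so the two endpoint vertices do not fire. If the row has length at least $3$, Proposition~\ref{prop:IncreaseUntilMiddle} forces every non-endpoint entry to be at least $a_i + 2 = 3$, so every interior vertex fires. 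The analogous parent-by-parent inspection then shows the new nonzero range shrinks to $[\alpha_i + 1, \beta_i]$: the old extremes lose their only would-be contributor, while the positions $\alpha_i + 1$ and $\beta_i$ still inherit chips from a firing interior parent. The length decreases by $1$. The remaining subcase, $a_i = 1$ with length $1$, is handled directly: the single vertex does not fire, row $i+1$ is empty, and the difference is $-1$.

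I expect the only real obstacle to be the parent-by-parent bookkeeping, and in particular ensuring that no position just outside the predicted new range picks up a stray chip; the strict monotonicity guaranteed by Proposition~\ref{prop:IncreaseUntilMiddle} is precisely what rules out anomalous interior $1$'s that could spoil the case split. One honest edge case deserves mention: if $a_i = 1$ and row $i$ has length $2$, neither vertex fires and row $i+1$ is empty, giving a difference of $-2$. This configuration occurs precisely at the terminal row of the intermediate configuration, so the proposition is naturally read as applying when both row $i$ and row $i+1$ carry at least one chip.
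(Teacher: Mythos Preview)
Your argument is correct and follows the same case split as the paper's (on whether the leftmost nonzero entry equals $1$ or is at least $2$). One small slip: your claim that every non-endpoint entry is at least $a_i + 2 = 3$ can fail at the center of an odd-length row, where Proposition~\ref{prop:IncreaseUntilMiddle} only guarantees an increase of $1$; but since the center is even it is still at least $2$ and hence fires, so the conclusion is unaffected. You also correctly flag the terminal edge case (a row $(1,1)$ followed by an empty row, a drop of $-2$) that the paper's proof glosses over---its appeal to $F(x-1,y+1) > 1$ tacitly assumes row $i$ has length at least $3$.
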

\begin{proof}
Let $F(x, y)$ be the leftmost nonzero entry of the $i$th row. Consequently, the rightmost nonzero entry of the $i$th row is $F(y, x) = F(x,y)$. Then, by Proposition~\ref{prop:IncreaseUntilMiddle}, all entries between $(x, y)$ and $(y, x)$ are nonzero, implying that there are $x-y+1$ nonzero terms in row $i$.

We consider two cases. First, suppose $F(x, y) \ge 2$.  We find that $F(x+1, y) = \floor{\frac{F(x, y)}{2}} + \floor{\frac{F(x+1, y-1)}{2}} \geq 1$, and $F(x+1, y)$ is the leftmost nonzero term of row $i+1$, implying that there are $x+2-y$ nonzero terms in row $i+1$. 

Now consider the case where $F(x, y) = 1$. We find that $F(x+1, y) = \floor{\frac{F(x, y)}{2}} =0$ and $F(x,y+1) = \floor{\frac{F(x, y)}{2}} + \floor{\frac{F(x-1, y+1)}{2}} \geq 1$. The last inequality holds because, due to Proposition~\ref{prop:IncreaseUntilMiddle}, we have $F(x-1, y+1) > F(x, y) = 1$. We thus deduce that $F(x, y+1)$ is the leftmost nonzero entry of row $i+1$. Therefore, there are exactly $x-y-1+1= x-y$ nonzero entries in row $i+1$.
\end{proof}

It follows that whenever the length of the row decreases, the previous row has one chip at the ends of the row in a stable configuration. It means the stable configuration has to be outlined by a boundary that contains chips.

In the middle of the intermediate configuration, we see a lot of cases when two rows that are two apart are the same length.

\begin{proposition}
\label{prop:rowstarts1a}
    If row $i$ starts with $1$ and $a$, where $4 \le a \le 7$, then row $i+2$ has the same length and also starts with $1$.
\end{proposition}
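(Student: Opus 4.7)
The plan is to apply the per-row leftmost analysis from the proof of the preceding proposition (on row lengths changing by $\pm 1$) twice in succession. Setting up notation, let $(x,y)$ be the vertex carrying the leftmost nonzero value of row $i$ in the paper's convention, so that $F(x,y) = 1$, and let the next entry to the right be at $(x-1, y+1)$ with $F(x-1, y+1) = a$. Contiguity of the nonzero block along row $i$ (so that this ``second from left'' entry genuinely exists and equals the $a$ of the hypothesis) follows from Proposition~\ref{prop:IncreaseUntilMiddle}.

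Next, I would invoke the preceding proposition's argument in the $F(x,y) = 1$ case: the leftmost of row $i+1$ sits at $(x, y+1)$ with value $\lfloor F(x-1, y+1)/2 \rfloor + \lfloor F(x,y)/2 \rfloor = \lfloor a/2 \rfloor$, and the length of row $i+1$ is one less than that of row $i$. For $a \in \{4, 5, 6, 7\}$ one has $\lfloor a/2 \rfloor \in \{2, 3\}$, in particular at least $2$, so the preceding proposition's other case now applies to row $i+1$. That case yields that the leftmost of row $i+2$ sits at $(x+1, y+1)$ with value $\lfloor \lfloor a/2 \rfloor / 2 \rfloor + \lfloor F(x+1,y)/2 \rfloor = \lfloor \lfloor a/2 \rfloor / 2 \rfloor$ (since $F(x+1,y)=0$), and that the length of row $i+2$ is one more than that of row $i+1$. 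The two length changes $-1$ and $+1$ cancel, so rows $i$ and $i+2$ have the same length.

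It then remains to verify that $\lfloor \lfloor a/2 \rfloor / 2 \rfloor = 1$ for each $a \in \{4, 5, 6, 7\}$; this is a four-case check, giving $\lfloor 2/2 \rfloor = 1$ when $a \in \{4, 5\}$ and $\lfloor 3/2 \rfloor = 1$ when $a \in \{6, 7\}$, which is exactly why the hypothesis pins $a$ to this interval. The only real obstacle is bookkeeping with the paper's left/right orientation and confirming that the second-leftmost vertex of row $i$ really is $(x-1, y+1)$; beyond that there is no conceptual difficulty, as the substantive work was already done in the preceding proposition and the rest is arithmetic.
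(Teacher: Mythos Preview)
Your proof is correct and follows essentially the same approach as the paper's own proof: both track the leftmost nonzero entry from $(x,y)$ in row $i$ to $(x,y+1)$ in row $i+1$ with value $\lfloor a/2\rfloor\in\{2,3\}$, and then to $(x+1,y+1)$ in row $i+2$ with value $\lfloor\lfloor a/2\rfloor/2\rfloor=1$, concluding that the row length returns to $x-y+1$. The only difference is presentational---you explicitly frame the two steps as invoking the two cases of the preceding proposition, whereas the paper simply carries out the same computations directly.
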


\begin{proof}
    Suppose the leftmost nonzero element in row $i$ is $F(x,y) = 1$, implying that the row length is $x-y+1$. Assume also that $F(x-1,y+1) = a$, where $4 \le a \le 7$. Then the vertex $(x,y)$ does not fire, and the leftmost nonzero term in row $i+1$ is $F(x,y+1) = \floor{\frac{F(x-1,y+1)}{2}}$. We have $2 \le \floor{\frac{F(x-1,y+1)}{2}} = \floor{\frac{a}{2}} \le 3$. It follows that the leftmost element in row $i+2$ is at the vertex $(x+1,y+1)$ and is equal to $\floor{\frac{F(x,y+1)}{2}} = 1$. It also follows that the length of row $i+2$ is $(x+1) - (y+1) + 1 = x-y+1$, the same as the length of row $i$.
\end{proof}

We also computed the length of the longest row, starting from index 0 (new sequence A390355 in the OEIS \cite{oeis}):
\[1,\ 2,\ 3,\ 4,\ 5,\ 6,\ 7,\ 8,\ 10,\ 13,\ 15,\ 19,\ 24,\ 30,\ 37,\ 46,\ 58,\ 73,\ \ldots .\]

\begin{example}
    We see that the sequence above starts as an arithmetic progression. Proposition~\ref{prop:rowstarts1a} explains why for $n \le 7$, the last row of the top triangle, which is the same as a row in the Pascal triangle, might be the longest row in the intermediate configuration.
\end{example}

Our computer data shows that the longest row is the width of the rectangle, which is the bottom part of the middle section.

\subsection{The number of nonzero rows of the intermediate firing configuration}

We start by looking at central elements.

\begin{proposition}\label{prop:Decreasing}
    If $F(x, x) > 0$, then $F(x+1, x+1) \leq F(x, x)-2$.
\end{proposition}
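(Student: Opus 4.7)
The plan is to chain together the diagonal-symmetry computation from the proof of Proposition~\ref{prop:evenCentre} with the middle-case inequality of Proposition~\ref{prop:IncreaseUntilMiddle}, converting the guaranteed gap of one at the center into a gap of two between consecutive diagonal entries by exploiting parity.

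First, I would mimic the identity used in the proof of Proposition~\ref{prop:evenCentre} to write
\[F(x+1,x+1) = 2\floor{\frac{F(x+1,x)}{2}},\]
and then unpack one more layer via the standard chip-firing recurrence
\[F(x+1,x) = \floor{\frac{F(x,x)}{2}} + \floor{\frac{F(x+1,x-1)}{2}}.\]
Since $F(x,x)$ is even by Proposition~\ref{prop:evenCentre} and positive by hypothesis, hence at least $2$, the first floor simplifies to $F(x,x)/2$.

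Next I would bound $F(x+1,x-1)$ from above. The middle case of Proposition~\ref{prop:IncreaseUntilMiddle} (with $y=x$) gives $F(x+1,x-1)\leq F(x,x)-1$ whenever that entry is positive, and this bound is automatic when $F(x+1,x-1)=0$ because $F(x,x)\geq 2$. Since $F(x,x)$ is even, $F(x,x)-1$ is odd, so $\floor{F(x+1,x-1)/2}\leq F(x,x)/2 - 1$. Adding the two floor estimates gives $F(x+1,x)\leq F(x,x)-1$, and applying $2\floor{\cdot/2}$ one more time yields $F(x+1,x+1)\leq 2\floor{(F(x,x)-1)/2} = F(x,x)-2$, as desired.

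The main subtlety is the floor bookkeeping: naively one might worry that the ``$\geq 1$'' gap supplied by Proposition~\ref{prop:IncreaseUntilMiddle} at the center is not enough to survive the outer $2\floor{\cdot/2}$ operation, which rounds downward. The even parity of $F(x,x)$ is exactly what turns $F(x,x)-1$ into an odd number and pushes the inner floor down to $F(x,x)/2-1$, producing the gap of two rather than one.
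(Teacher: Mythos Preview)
Your proof is correct and follows essentially the same route as the paper: expand $F(x+1,x+1)=2\lfloor F(x+1,x)/2\rfloor$ via symmetry, unpack $F(x+1,x)$ by the recurrence, bound $F(x+1,x-1)$ against $F(x,x)$ using Proposition~\ref{prop:IncreaseUntilMiddle}, and exploit the evenness of $F(x,x)$ from Proposition~\ref{prop:evenCentre}. The only cosmetic difference is that the paper concludes by showing $F(x+1,x+1)<F(x,x)$ and then invoking that both sides are even, whereas you first establish $F(x+1,x)\le F(x,x)-1$ and then push this through the outer $2\lfloor\cdot/2\rfloor$; the ingredients and logic are identical.
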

\begin{proof}
Suppose $F(x,x) > 0$. We also know that $F(x,x)$ is even by Proposition~\ref{prop:evenCentre}, implying that $F(x+1, x) = F(x, x+1) = \floor{\frac{F(x+1, x-1)}{2}} + \frac{F(x, x)}{2}$.
     
Now, by Proposition~\ref{prop:IncreaseUntilMiddle}, we have $F(x+1, x-1) < F(x, x)$. Because of this and since $F(x, x)$ is even, we obtain that $\frac{F(x,x)}{2} >\floor{\frac{F(x+1, x-1)}{2}}$. Consequently, we find that
\[F(x+1, x+1) = \floor{\frac{F(x+1, x)}{2}} + \floor{\frac{F(x, x+1)}{2}} = 2\floor{\frac{F(x+1, x)}{2}} = 2\floor{\frac{\floor{\frac{F(x+1, x-1)}{2}} + \frac{F(x, x)}{2}}{2}} < F(x, x).\]

Knowing that both $F(x, x)$ and $F(x+1, x+1)$ are even, we find $F(x+1, x+1) \leq F(x, x)-2$.
\end{proof}

We establish an upper bound on the number of rows.

\begin{proposition}\label{prop:FiniteBoundOnRows}
    Consider the stable configuration resulting from $2^n$ chips starting at the root. The furthest chip from the root is at row equal to or above row $n+\binom{n}{n/2}$ if $n$ is even and equal to or above row $n+1+2\floor{\binom{n}{\lfloor n/2 \rfloor}/2}$ if $n$ is odd.
\end{proposition}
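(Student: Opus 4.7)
The plan is to track the entries of $F$ on the main diagonal $(m,m)$, use Proposition~\ref{prop:Decreasing} to force those central entries to decay by at least $2$ per step, and then invoke unimodality (Proposition~\ref{prop:IncreaseUntilMiddle}) to upgrade "the middle entry is zero" to "the whole row is zero." The reason this suffices: the middle of a row is, by Proposition~\ref{prop:IncreaseUntilMiddle}, the maximum entry of that row, so if it equals $0$ the row is identically zero; and because the $F$-values in row $i+1$ are obtained from floor-halves of $F$-values in row $i$, once a row is all zero every subsequent row is all zero, and Theorem~\ref{thm:stableisodd} then forbids any chip of the stable configuration below that row.

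For $n$ even, Proposition~\ref{prop:PascalTriangle} at row $i=n$ gives $F(n/2,n/2)=\binom{n}{n/2}$. Iterating Proposition~\ref{prop:Decreasing} produces $F(n/2+k,\,n/2+k)\le \binom{n}{n/2}-2k$ for every $k\ge 0$, with the understanding that we stop once the right side reaches $0$ since $F\ge 0$. Setting $k=\binom{n}{n/2}/2$, an integer for $n\ge 2$ by Kummer's theorem (the case $n=0$ is trivial), forces $F(n/2+k,\,n/2+k)=0$, and this central entry sits in row $n+2k=n+\binom{n}{n/2}$.

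For $n$ odd, the middle of row $n$ is a symmetric pair off the diagonal, so I first descend one row. Proposition~\ref{prop:PascalTriangle} at row $n$ gives $F(\lfloor n/2\rfloor,\lceil n/2\rceil)=\binom{n}{\lfloor n/2\rfloor}$, and together with Proposition~\ref{prop:symmetry} the defining recursion yields
\[F\!\left(\tfrac{n+1}{2},\tfrac{n+1}{2}\right)\;=\;2\floor{\tfrac{1}{2}\binom{n}{\lfloor n/2 \rfloor}}.\]
Iterating Proposition~\ref{prop:Decreasing} from this entry gives $F\!\left(\tfrac{n+1}{2}+k,\,\tfrac{n+1}{2}+k\right)\le 2\floor{\binom{n}{\lfloor n/2\rfloor}/2}-2k$, which first reaches $0$ at $k=\floor{\binom{n}{\lfloor n/2\rfloor}/2}$, located in row $n+1+2\floor{\binom{n}{\lfloor n/2\rfloor}/2}$. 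Unimodality then kills the rest of that row, and every subsequent row.

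I expect the main obstacle to be the clean passage from "central entry is zero" to "the stable configuration has no chip in this or any later row," rather than the iterated decrease itself, which is essentially a one-line consequence of Proposition~\ref{prop:Decreasing}. The unimodality step must be handled with care when the diagonal entry first drops to zero (the row could a priori still have nonzero entries off-center, but Proposition~\ref{prop:IncreaseUntilMiddle} rules this out since the middle is the maximum). A secondary subtlety is the parity bookkeeping: in the even case one must know that $\binom{n}{n/2}$ is even so that $k=\binom{n}{n/2}/2$ is an integer, and in the odd case one must shift to row $n+1$ (so that a single diagonal vertex is the middle) before Proposition~\ref{prop:Decreasing} can be applied.
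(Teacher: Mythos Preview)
Your proof is correct and follows essentially the same route as the paper's: compute the central entry at row $n$ (or row $n+1$ for odd $n$) via Proposition~\ref{prop:PascalTriangle}, iterate Proposition~\ref{prop:Decreasing} to drive the diagonal entry to zero, and then invoke unimodality (Proposition~\ref{prop:IncreaseUntilMiddle}) to conclude that the whole row, and hence every later row, vanishes. The paper's version is terser on the unimodality step and simply asserts the evenness of $\binom{n}{n/2}$ rather than citing Kummer, but the structure and key lemmas are identical.
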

\begin{proof}
    We first consider the case where $n$ is even. By Proposition~\ref{prop:PascalTriangle}, the central element in row $i=n$ is $F(\frac{n}{2}, \frac{n}{2}) = \binom{n}{n/2}$. Using the fact that $F(x+1, x+1) \leq F(x, x) - 2$ from Proposition~\ref{prop:Decreasing}, and also the fact that $\binom{n}{n/2}$ is even for $n > 0$,
    we get
    \[F\left(\frac{n}{2}+\frac{\binom{n}{n/2}}{2}, \frac{n}{2}+\frac{\binom{n}{n/2}}{2}\right) \le 0.\]
    As the central element in this row is zero, the entire row $n+\binom{n}{n/2}+1$ consists of zeros.

    Now consider the case where $n$ is odd. We find that near the center of row $i=n$, we have $F(\lceil n/2 \rceil, \floor{n/2}) = F( \floor{n/2}, \lceil n/2 \rceil) = \binom{n}{\floor{n/2}}$. Similar to above, we get that
    \[F\left(\frac{n+1}{2}+\floor{\binom{n}{\floor{n/2}}/2}, \frac{n+1}{2}+\floor{\binom{n}{\floor{n/2}}/2}\right) \le 0,\]
    implying that row $n+2+2\floor{\binom{n}{\lfloor n/2 \rfloor}/2}$ consists entirely of zeros.
\end{proof}

One can observe that if row $i$ of $F$ is nonzero, then all previous rows are nonzero. To see this, we observe that all vertices in row $i$ received all of their chips from row $i-1$, since all in-neighbors of row $i$ are from row $i-1$. It follows that if $F$ has $x$ nonzero rows, then the last row is row $x-1$.

We wrote a program to calculate the maximum number of nonzero rows in the table $F(x,y)$, starting from $n=0$ (new sequence A390129 in the OEIS \cite{oeis}):

\[1,\ 2,\ 4,\ 6,\ 10,\ 16,\ 24,\ 38,\ 60,\ 92,\ 144,\ 226,\ 362,\ 570,\ 906,\ 1430,\ \ldots.\]

Notice that starting from index $n=1$, the number of rows is even. The following sequence is half of the above for $n > 0$:
\[1,\ 2,\ 3,\ 5,\ 8,\ 12,\ 19,\ 30,\ 46,\ 72,\ 113,\ 181,\ 270,\ 453,\ 715\ \ldots.\] 

This is not a coincidence, as we prove in the following proposition.
\begin{proposition}
    The number of nonzero rows in the intermediate firing configuration $F$ is even for $n\geq 1$.
\end{proposition}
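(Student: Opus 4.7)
The plan is to let $R$ denote the index of the last row of $F$ containing a nonzero entry, so that the total number of nonzero rows equals $R+1$, and to show that $R$ is odd. I will argue by contradiction: suppose $R = 2k$ is even, so that the diagonal vertex $(k,k)$ lies in row $R$.

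First I would establish that $F(k,k) > 0$. Row $R$ contains at least one nonzero entry by the definition of $R$; using the symmetry $F(x,y) = F(y,x)$ from Proposition~\ref{prop:symmetry}, I may take the leftmost nonzero entry to lie at some $(k+j_0, k-j_0)$ with $j_0 \geq 0$. If $j_0 = 0$, the leftmost nonzero entry is already $(k,k)$ itself, so $F(k,k) > 0$. Otherwise, I apply Proposition~\ref{prop:IncreaseUntilMiddle} iteratively along the sequence
\[
(k+j_0,\ k-j_0),\ (k+j_0-1,\ k-j_0+1),\ \ldots,\ (k+1,\ k-1),\ (k,k),
\]
which guarantees that each successive entry is strictly larger than its predecessor (by at least $2$ off the diagonal, and by at least $1$ at the final step onto the diagonal). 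This propagates positivity all the way to the center, yielding $F(k,k) > 0$.

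Next I would invoke Proposition~\ref{prop:evenCentre}: since $n \geq 1$, the value $F(k,k)$ is even. Combined with $F(k,k) > 0$, this forces $F(k,k) \geq 2$. But then the vertex $(k,k)$ is eligible to fire, producing chips at $(k+1,k)$ and $(k,k+1)$ in row $R+1$, contradicting the assumption that $R$ is the last nonzero row. Hence $R$ is odd and $R+1$ is even, as desired.

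I do not anticipate a substantive obstacle: every step appeals directly to a previously established result. The only point that merits a small amount of care is the iterative application of Proposition~\ref{prop:IncreaseUntilMiddle} to carry positivity from the leftmost nonzero entry across to the center, and the mild asymmetry in that proposition between the off-diagonal increase (by at least $2$) and the final diagonal increase (by at least $1$), neither of which causes trouble here since I only need strict positivity at $(k,k)$.
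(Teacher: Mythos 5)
Your argument is correct and rests on the same crux as the paper's: the last nonzero row cannot contain a diagonal vertex $(k,k)$ with a positive entry, since by Propositions~\ref{prop:IncreaseUntilMiddle} and~\ref{prop:evenCentre} that entry would be positive and even, hence at least $2$ and able to fire into the next row. The paper packages this via the alternating parity of row lengths (odd-length rows being exactly those with a unique central element on the line $y=x$), whereas you argue directly on the parity of the row index and propagate positivity to the center; this is only a cosmetic difference.
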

\begin{proof}
    We claim that the number of elements in each row alternates between odd and even. This is because if the number of elements in a row is even, then there are two copies of the largest element in the row in the center of the row. Once this row completely fires, the next row will have an odd number of elements with a unique element on the center line. Note that this unique element must be even by Proposition~\ref{prop:evenCentre} and the largest element in the row by Proposition~\ref{prop:IncreaseUntilMiddle}. 

    Now, suppose the last row has an odd number of elements. Since there must be a unique element on the center line, it must be even. However, this means that we have at least two chips in the central vertex in the last row, which is a contradiction. Therefore, the last row must have an even number of elements, and the number of rows in the table of $F(x,y)$ must be even since the first row has a single element.
\end{proof}

\section{The Bottom Triangle}\label{sec:BottomTriangle}

This section is devoted to the bottom rows of the table $F(x, y)$, which we also call the bottom triangle, where the lengths of the rows strictly decrease as one reads down the table. 

We call a row of nonzero entries of length $j+1$ \textit{minimal} if it has the smallest possible entries, is symmetric, and satisfies conditions of Proposition~\ref {prop:IncreaseUntilMiddle}. We denote such a row as $R(j)$. 

We can see that if $j = 2k-1$, then the row $R(j)$ is 
\[1,\ 3,\ 5,\ 7,\ \dots,\ j,\ j,\ \dots,\ 7,\ 5,\ 3,\ 1.\]
When $j = 2k$, the row $R(j)$ is
\[1,\ 3,\ 5,\ 7,\ \dots,\ j-1,\ j,\ j-1,\ \dots,\ 7,\ 5,\ 3,\ 1.\]

We define \textit{the bottom triangle} as the largest set of rows at the bottom of $F(x,y)$ such that each next row has one fewer entry than the previous one. We will later show that the bottom triangle consists of rows $R(j)$, where $R(1)$ is the last row.

Let us calculate the number of chips in row $R(j)$. We denote this sum as $S(j)$. We see from examples that the sequence $S(j)$ starts as 
\[2,\ 4,\ 8,\ 12,\ 18,\ 24,\ 32,\ 40,\ 50,\]
and is sequence A007590 in the OEIS \cite{oeis}. We have $S(2k-1) = 2k^2$ and $S(2k) = 2k(k+1)$. Combining the formulas, we get
\[S(j) = \floor{\frac{(j+1)^2}{2}}.\]

\begin{lemma}\label{lem:NextRow}
If a row in a configuration is $R(j)$, then the next row is $R(j-1)$.
\end{lemma}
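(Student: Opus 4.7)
The plan is to apply the firing rule directly to $R(j)$ and verify that the resulting next row of $F$ is exactly $R(j-1)$. Write the entries of $R(j)$ from left to right as $a_0, a_1, \ldots, a_j$, and index the $j+2$ vertices of the next row that can possibly receive chips as $b_0, b_1, \ldots, b_{j+1}$, also left to right. Since each vertex has exactly two out-neighbors (its left and right children in the next row), the vertex holding $a_m$ deposits $\floor{a_m/2}$ chips into each of positions $m$ and $m+1$ of the next row. Summing over parents,
\[
b_{m'} \;=\; \floor{\frac{a_{m'-1}}{2}} + \floor{\frac{a_{m'}}{2}},
\]
with the convention $a_{-1} = a_{j+1} = 0$. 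The endpoint entries $a_0 = a_j = 1$ do not fire, so $b_0 = b_{j+1} = 0$; thus the nonzero support of the next row has length at most $j$, matching the length of $R(j-1)$.

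What remains is to plug in the explicit values of $a_m$, split by the parity of $j$, and verify that $b_1, \ldots, b_j$ coincides with $R(j-1)$. Suppose $j = 2k-1$, so that $a_m = 2m+1$ for $m \le k-1$ and $a_m = 2(j-m)+1$ for $m \ge k$, with a doubled middle $a_{k-1} = a_k = 2k-1$. Direct computation gives $b_{m'} = m' + (m'-1) = 2m'-1$ for $1 \le m' \le k-1$ and $b_k = (k-1)+(k-1) = 2k-2$; the symmetry of $R(j)$ inherited from Proposition~\ref{prop:symmetry} supplies the mirror image on the right. After re-indexing $m' \mapsto m'-1$, the interior sequence becomes $1, 3, \ldots, 2k-3, 2k-2, 2k-3, \ldots, 3, 1$, which is precisely $R(2k-2) = R(j-1)$. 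If instead $j = 2k$, the unique even peak $a_k = 2k$ satisfies $\floor{a_k/2} = k$, while $\floor{a_{k\pm 1}/2} = k-1$, so $b_k = b_{k+1} = 2k-1$; the outer entries repeat the odd-ascent pattern and produce $R(2k-1) = R(j-1)$.

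The main obstacle is purely bookkeeping: carefully tracking the index shift between rows and handling the two parity cases without conflating them. No conceptual subtlety arises, because every entry of $R(j)$ is either odd or the even peak $2k$, so each floor is transparent. Once both parity computations are tabulated and matched against the definition of $R(j-1)$, the lemma is complete.
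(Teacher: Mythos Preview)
Your argument is correct and self-contained: you fire $R(j)$ vertex by vertex, compute each $b_{m'}$ explicitly in both parity cases, and match the result against the definition of $R(j-1)$. The paper takes a different, more global route: it counts the total number of chips that pass from $R(j)$ to the next row, shows this total equals $S(j-1)$, and then invokes the fact that the next row has length at least $j$ together with the minimality definition of $R(j-1)$ (the unique symmetric row of that length satisfying Proposition~\ref{prop:IncreaseUntilMiddle} with the smallest possible entries) to conclude it must be $R(j-1)$. Your approach trades that structural characterization for transparent floor arithmetic; the paper's approach avoids the per-entry bookkeeping but leans on the uniqueness of the minimal row. One small note: the symmetry you cite for the right half of the next row comes directly from the palindromic definition of $R(j)$, not from Proposition~\ref{prop:symmetry}.
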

\begin{proof}
The number of chips that stay at the vertices in row $R(j)$ after firing is $2\floor{\frac{j+1}{2}}$. Thus, the number of chips that move to the next row is $\floor{\frac{(j+1)^2}{2}} - 2\floor{\frac{j+1}{2}}$.

If $j=2k-1$, the above equals $2k^2 - 2k = 2k(k-1) = S(2k-2) = S(j-1)$. If $j=2k$, the above equals $2k^2 +2k - 2k = 2k^2 = S(2k-2) = S(j-1)$. We know that the length of a row can decrease by 1 at most, and the only row with at least $j$ entries with $S(j-1)$ chips is the minimal row $R(j-1)$.
\end{proof}

\begin{proposition}
    If in an intermediate configuration $F(x,y)$, a row is $R(j)$ and the previous row has $j+1$ nonzero elements, then the previous row is $R(j+1)$.
\end{proposition}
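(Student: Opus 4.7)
The plan is to reconstruct the previous row entry by entry from the left, using the known values of $R(j)$ together with Proposition~\ref{prop:IncreaseUntilMiddle}, Proposition~\ref{prop:evenCentre}, and the row symmetry of Proposition~\ref{prop:symmetry}.

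First I would set up coordinates: write the consecutive nonzero entries of the previous row as $b_0, b_1, \dots, b_{j+1}$ from left to right and those of the current row $R(j)$ as $c_0, c_1, \dots, c_j$. Since the current row has one fewer nonzero entry than the previous row, the argument used in the proof of the $\pm 1$ row-length proposition forces $b_0 = 1$; otherwise $\floor{b_0/2}\geq 1$ would make the leftmost child of $b_0$ nonzero and the current row would gain length rather than lose it. Tracking the two in-neighbors of each vertex in the current row then yields the recursion
\[
c_\ell \;=\; \floor{\tfrac{b_\ell}{2}} + \floor{\tfrac{b_{\ell+1}}{2}}, \qquad \ell = 0, 1, \dots, j.
\]

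I would then induct on $\ell$. Starting from $b_0 = 1$ and $c_0 = 1$, the recursion forces $\floor{b_1/2} = 1$, so $b_1 \in \{2, 3\}$; Proposition~\ref{prop:IncreaseUntilMiddle} gives $b_1 \geq b_0 + 2 = 3$, so $b_1 = 3$. In general, once $b_\ell = 2\ell + 1$ is known and $b_{\ell+1}$ still lies strictly to the left of the diagonal $y = x$, the recursion fixes $\floor{b_{\ell+1}/2} = \ell + 1$, and Proposition~\ref{prop:IncreaseUntilMiddle} selects $b_{\ell+1} = 2\ell + 3$ out of $\{2\ell+2, 2\ell+3\}$.

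The main obstacle is handling the central entries of the previous row, where the strict $+2$ gap of Proposition~\ref{prop:IncreaseUntilMiddle} weakens to a $+1$ gap. Two subcases arise. If $j = 2k - 1$, the previous row has odd length $2k+1$ and its unique central entry $b_k$ lies on the diagonal; Proposition~\ref{prop:evenCentre} forces $b_k$ to be even, and combined with $\floor{b_k/2} = k$ extracted from $c_{k-1} = 2k-1$, this gives $b_k = 2k$, matching $R(j+1) = R(2k)$. If $j = 2k$, the previous row has even length $2k + 2$ with two central entries $b_k$ and $b_{k+1}$ equal by Proposition~\ref{prop:symmetry}; a short coordinate check shows that $b_k$ still lies strictly to the left of the diagonal, so Proposition~\ref{prop:IncreaseUntilMiddle} delivers the strict jump $b_k \geq b_{k-1} + 2 = 2k + 1$, forcing $b_k = b_{k+1} = 2k + 1$ and matching $R(j+1) = R(2k+1)$.

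Finally, Proposition~\ref{prop:symmetry} recovers the right half of the previous row as the mirror of the left half already determined, completing the identification of the previous row with $R(j+1)$.
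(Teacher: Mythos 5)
Your proof is correct, but it proceeds quite differently from the paper's. The paper's argument is a global counting one: it compares the \emph{maximum} number of chips the previous row could hold (the $S(j)$ chips it passes down to $R(j)$, plus at most one retained chip per vertex, with the central vertex of an odd-length row forced by Proposition~\ref{prop:evenCentre} to retain none) against the \emph{minimum} number of chips any row with $j+2$ nonzero entries can hold; both equal $S(j+1)$, and the unique row attaining that minimum is $R(j+1)$. You instead reconstruct the previous row entry by entry from the left via the recursion $c_\ell = \floor{b_\ell/2} + \floor{b_{\ell+1}/2}$, pinning down each $b_{\ell+1}$ from the two-element set $\{2\ell+2, 2\ell+3\}$ using the $+2$ gap of Proposition~\ref{prop:IncreaseUntilMiddle}, and handling the diagonal entry via Proposition~\ref{prop:evenCentre}; your coordinate checks for which central entries lie strictly left of the diagonal are accurate in both parity cases. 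The paper's proof is shorter but leaves implicit the final uniqueness step (that a row of the given length with total $S(j+1)$ must coincide with $R(j+1)$ entrywise); your proof is longer but fully explicit and yields as a byproduct that every non-central entry of the previous row is odd, i.e., the previous row leaves a chip at every position except the center. Note that both you and the paper's own proof silently read the hypothesis as saying the previous row has $j+2$ nonzero elements (one more than $R(j)$), which appears to be the intended reading of the statement.
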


\begin{proof}
    Row $R(j)$ has $j+1$ nonzero entries. Moreover, considering constraints in Proposition~\ref{prop:IncreaseUntilMiddle}, row $R(j)$ has the smallest number of chips possible in each place for a row with the given number of entries.

    On the other hand, given a row, the maximum total number of chips in the previous row happens when the previous row keeps one chip after firing whenever possible. This is possible for all entries except the middle one.
    
    Suppose $j = 2k$, then it has $2k+1$ nonzero entries, and the previous row $2k+2$ entries. The maximum possible number of chips in the previous row is
    \[S(j) + 2k+2 = 2k(k+1) + 2k+2 = 2(k^2 + k +k+1) = 2(k+1)^2 = S(j+1).\]
    Similarly, if $j=2k-1$, then it has $2k$ nonzero entries, and the previous row has $2k+1$ entries. However, the middle entry has to be even; thus, the row can keep a maximum $2k$ chips, so it can have at most
    \[S(j) + 2k = 2k^2 + 2k = 2k(k+1) = S(j+1)\]
    chips. We see that the maximum possible number of chips in the previous row is the same as the minimum possible number of chips in a row, given the number of nonzero elements. Thus, that row can only be row $R(j+1)$.
\end{proof}

This means the bottom triangle consists solely of minimal rows.

When computing $F(x, y)$ for $n$ up to and including $12$, we observe that any configuration $F(x,y)$ ends with the long rectangle followed by the bottom triangle. We also see that the rectangle is the widest part of the intermediate configuration. This inspires the following conjecture.

\begin{conjecture}
    The number of rows in the bottom triangle equals the length of the longest row minus 1.
\end{conjecture}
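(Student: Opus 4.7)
Let $L$ denote the length of the longest row of $F(x,y)$, and let $\ell$ denote the length of the top row of the bottom triangle. By the proposition immediately preceding the conjecture, the bottom triangle consists precisely of the minimal rows $R(\ell-1), R(\ell-2), \ldots, R(1)$, which is a total of $\ell-1$ rows. The conjecture is therefore equivalent to the equality $\ell = L$. One direction is immediate: since the top row of the bottom triangle is a row of $F$, its length is at most $L$, so $\ell \leq L$.

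For the reverse direction $\ell \geq L$, the plan is to exhibit some row of $F$ of length $L$ that is already minimal, i.e.\ equal to $R(L-1)$. Once such a row is found, Lemma~\ref{lem:NextRow} propagates minimality downward, yielding a chain $R(L-1), R(L-2), \ldots, R(1)$ of minimal rows of strictly decreasing length; this chain lies inside the bottom triangle, forcing $\ell \geq L$. To locate a minimal length-$L$ row, I would study the middle section starting from the first occurrence of a length-$L$ row. By maximality of $L$ and the $\pm 1$ proposition on consecutive row lengths, the lengths in this section must oscillate between $L-1$ and $L$, producing the rectangular region observed in the data. Along this oscillation I would track a monovariant on the length-$L$ rows: for instance, the total chip sum of the row, or (when $L$ is odd) the central entry $F(x,x)$, whose strict decrease by at least $2$ along the diagonal is already furnished by Proposition~\ref{prop:Decreasing}. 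Combined with the shape constraints from Proposition~\ref{prop:IncreaseUntilMiddle}, the goal is to show that the monovariant must decay all the way to the minimum value $S(L-1)$, at which point the row is forced to equal $R(L-1)$.

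The central obstacle, and presumably the reason this is stated as a conjecture, is ruling out the possibility that between the first length-$L$ row and the bottom triangle, the row lengths dip to some width strictly less than $L-1$ and later rebound to $L$. Such a dip would break the rectangular structure and invalidate the monovariant argument, because the set of length-$L$ rows in play would no longer form a simple alternating sequence. Excluding it seems to require a structural strengthening of Proposition~\ref{prop:rowstarts1a}: one needs to show that for every endpoint pattern $(1, a, \ldots)$ occurring in the middle section (not only $4 \leq a \leq 7$), the two-step propagation preserves the current row length, so that the width can only decrease permanently upon entering the bottom triangle. A careful case analysis of the endpoint dynamics, together with the monovariant decay inside the resulting rectangle, would then close the argument and establish $\ell = L$.
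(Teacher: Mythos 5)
This statement is left as an \emph{open conjecture} in the paper: the authors support it only with computational data for $n$ up to $12$ and give no proof, so there is no argument of theirs to compare yours against. Your reduction is correct and consistent with the paper's framework: since the bottom triangle consists of the minimal rows $R(\ell-1), R(\ell-2), \ldots, R(1)$ (where $\ell$ is the length of its top row), it has $\ell-1$ rows, the conjecture is equivalent to $\ell = L$, and the inequality $\ell \le L$ is immediate.

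However, what you have written for the direction $\ell \ge L$ is a research plan rather than a proof, and you acknowledge this yourself. Concretely, three steps are missing. First, you never actually exhibit a row of $F$ equal to $R(L-1)$: Proposition~\ref{prop:Decreasing} shows the central entries strictly decrease along the diagonal, but it does not show that this decay terminates exactly at the minimal configuration $R(L-1)$ rather than the width dropping below $L$ while the length-$L$ rows are still non-minimal. Second, the assertion that the row lengths oscillate between $L$ and $L-1$ throughout the lower middle section (the ``rectangle'') is precisely the unproven structural claim on which the conjecture rests; Proposition~\ref{prop:rowstarts1a} covers only the endpoint pattern $(1,a)$ with $4 \le a \le 7$, and the case analysis needed to exclude a dip and rebound in width is left entirely open. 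Third, even granting the rectangle, the claim that your monovariant ``must decay all the way to $S(L-1)$'' is asserted, not derived: one would need a quantitative lower bound on the per-step decrease of the chip sum of the length-$L$ rows together with an argument that it cannot stabilize strictly above $S(L-1)$. You have correctly identified where the difficulty lies, but none of these obstacles is overcome, so the statement remains a conjecture.
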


\section{Discussion}\label{sec:Discussion}

Figure~\ref{fig:threerows} superimposes three plots corresponding to different rows for $n=11$.

The dashed line is the last row of the top triangle:
\[1,\ 11,\ 55,\ 165,\ 330,\ 462,\ 462,\ 330,\ 165,\ 55,\ 11,\ 1.\]
The graph shape approaches the normal density
\[{\varphi (x)={\frac {e^{-x^{2}/2}}{\sqrt {2\pi }}},}\]
when $n$ tends to infinity. On that row, $F(x, y)$ behaves like a density function of a Gaussian distribution.

The middle plot, the solid black curve, corresponds to the first row of the longest length in the intermediate configuration:
\[1,\ 6,\ 18,\ 38,\ 66,\ 102,\ 143,\ 181,\ 208,\ 218,\ 208,\ 181,\ 143,\ 102,\ 66,\ 38,\ 18,\ 6,\ 1.\]
Although it might look similar to a bell curve, it is actually flatter and wider.

The lower gray plot is the last row of the middle rectangle and the start of the bottom triangle:
\[1,\ 3,\ 5,\ 7,\ 9,\ 11,\ 13,\ 15,\ 17,\ 18,\ 17,\ 15,\ 13,\ 11,\ 9,\ 7,\ 5,\ 3,\ 1.\]
Excluding the center point, it consists of two line segments and looks almost flat.

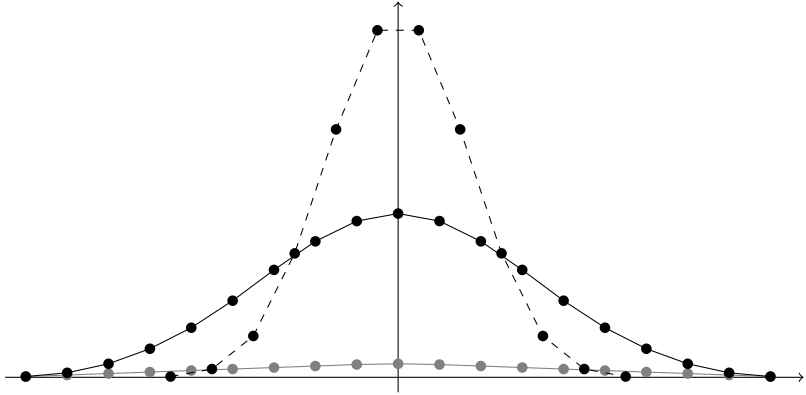
\begin{figure}[ht!]
    \centering
\begin{tikzpicture}[x=0.55cm,y=0.01cm]
  \draw[->] (-0.5,0) -- (18.8,0);
  \draw[->] (9,-20) -- (9,500);

  \foreach \k in {0,1,...,18} {
  }

  \foreach \k/\v in {
    0/1,1/3,2/5,3/7,4/9,5/11,6/13,7/15,8/17,9/18,10/17,11/15,12/13,13/11,14/9,15/7,16/5,17/3,18/1
  }{
    \fill[gray] (\k,\v) circle (2pt);
  }

    \draw[gray]
    (0,1)--(1,3)--(2,5)--(3,7)--(4,9)--(5,11)--(6,13)--(7,15)--(8,17)--(9,18)--
    (10,17)--(11,15)--(12,13)--(13,11)--(14,9)--(15,7)--(16,5)--(17,3)--(18,1);

  \foreach \k/\v in {
    0/1,1/6,2/18,3/38,4/66,5/102,6/143,7/181,8/208,9/218,
    10/208,11/181,12/143,13/102,14/66,15/38,16/18,17/6,18/1
  }{
    \fill (\k,\v) circle (2pt);
  }

  \draw
    (0,1)--(1,6)--(2,18)--(3,38)--(4,66)--(5,102)--(6,143)--(7,181)--(8,208)--(9,218)--
    (10,208)--(11,181)--(12,143)--(13,102)--(14,66)--(15,38)--(16,18)--(17,6)--(18,1);


  \foreach \k/\v in {3.5/1,4.5/11,5.5/55,6.5/165,7.5/330,8.5/462,9.5/462,10.5/330,11.5/165,12.5/55,13.5/11,14.5/1} {
    \fill (\k,\v) circle (2pt);
  }
  
  \foreach \k/\v in {
    3.5/1,4.5/11,5.5/55,6.5/165,7.5/330,8.5/462,
    9.5/462,10.5/330,11.5/165,12.5/55,13.5/11,14.5/1
  }{
  }

  \draw[black,dashed]
    (3.5,1)--(4.5,11)--(5.5,55)--(6.5,165)--(7.5,330)--(8.5,462)--
    (9.5,462)--(10.5,330)--(11.5,165)--(12.5,55)--(13.5,11)--(14.5,1);
\end{tikzpicture}   
    \caption{Three significant rows in the intermediate configuration}
    \label{fig:threerows}
\end{figure}

If we look at the representation of the stable configuration in Figure~\ref{fig:stableconfign=9}, we see that the top of the middle part is more chaotic, while it gets more and more structured in the lower rows. The bottom triangle is very structured, so that we can even completely describe it through minimal rows. Not surprisingly, this description correlates with Figure~\ref{fig:threerows}, where the top plot, bell curve, is associated with randomness, and the bottom plot, arithmetic progression, is associated with structure.

We want to understand this change of structure better, so we look at the difference tables in the next section.

\section{Difference Tables}\label{sec:DiffTables}

We now define the table of differences between consecutive entries in $F(x, y)$. We define the \emph{difference table} $F'$ as follows:
\[F'(x,y) = F(x-1,y) - F(x,y-1).\]
\begin{example}
    For example,
    \[F'(1,0) = F(0,0) - F(1,-1) = 2^n\]
    and
    \[F'(0,1) = F(-1,0) - F(0,0) = -2^n.\]
\end{example}

\begin{example}\label{ex:Beginning_of_table}
Consider the top of the table $F(n)$ for $n>3$ as depicted in Figure~\ref{fig:Table1}. Then, we can compute the top of the corresponding difference table $F'(n)$, as depicted in Figure~\ref{fig:Table2}.
\begin{figure}[ht!]
\begin{center}
  \begin{tabular}{ccccccccc}
        & & &  & $2^n$  & & &  \\
      &  & & $2^{n-1}$ &  &  $2^{n-1}$ & &  \\  
      & & $2^{n-2}$ &  & $2^{n-1}$ & &  $2^{n-2}$ & \\ 
      & $2^{n-3}$ & & $3 \cdot 2^{n-3}$ & & $3 \cdot 2^{n-3}$ & & $2^{n-3}$ \\ 
      \textbf{$2^{n-4}$} & & $4 \cdot 2^{n-4}$ & & $6 \cdot 2^{n-4}$ & & $4 \cdot 2^{n-4}$  &  & \textbf{$2^{n-4}$} \\ 
  \end{tabular} 
\end{center}
  \caption{Top of the table $F(n)$ for $n > 3$ (only nonzero values).}
\label{fig:Table1}
\end{figure}
\begin{figure}[ht!]
\begin{center}
  \begin{tabular}{ccccccccccc}
      & & & & $2^n$ & & $-2^n$  & & & & \\
      & & & $2^{n-1}$ & & 0 & & $-2^{n-1}$ & & & \\  
      & & $2^{n-2}$ & & $2^{n-2}$ & & $-2^{n-2}$ & & $-2^{n-2}$ & &\\ 
      & $2^{n-3}$ & & $2 \cdot 2^{n-3}$ & & 0 & & $-2 \cdot 2^{n-3}$ & & $-2^{n-3}$ & \\ 
      $2^{n-4}$ & & $3 \cdot 2^{n-4}$ & & $2 \cdot 2^{n-4}$ & & $2 \cdot -2^{n-4}$ & & $-3 \cdot 2^{n-4}$  &  & $-2^{n-4}$. \\ 
  \end{tabular}
\end{center}
\caption{Top of the table $F'(n)$ for $n > 3$ (only nonzero values).}
      \label{fig:Table2}
\end{figure}
\end{example}

We see that the maximum absolute values in rows of $F'$, as we can see from Figure~\ref{fig:Table2} are
\[2^n,\ 2^{n-1},\ 2^{n-2},\ 2^{n-2},\ 3\cdot 2^{n-4}.\]
We see that this maximum sequence is weakly decreasing, and we will prove that later. We will also prove that in the left half of the difference table, the values weakly increase, then weakly decrease.

When we compute the difference tables $F'(n)$ for some large $n$, we notice consecutive entries that have the same value. Towards the bottom rows, we observe that these regions look like triangles. We illustrate this phenomenon for the table $F'(9)$ in Figure~\ref{fig:testingn9}. As before, we ignore zero values outside of the main shape and put the table horizontally to better fit the page.
\begin{figure}[h]
    \centering
    \includegraphics[width=1\linewidth]{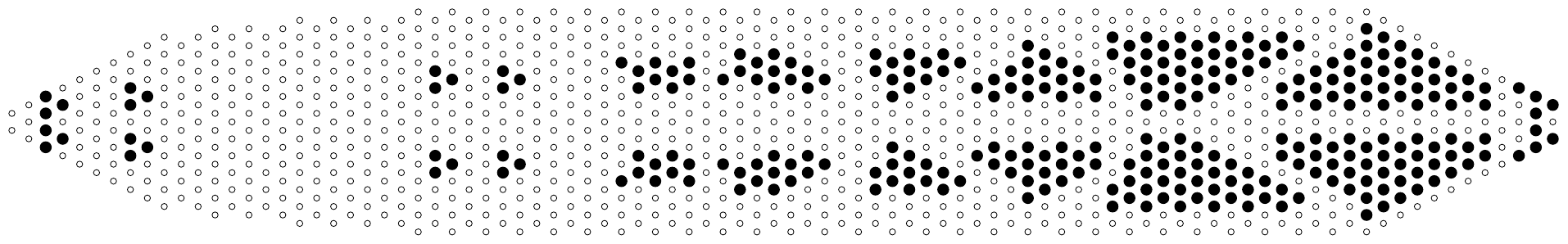}
    \caption{Consecutive regions, denoted by black dots, of $F'(9)$ where entries have the same value.}
    \label{fig:testingn9}
\end{figure}

We now prove general properties of difference tables. The first is analogous to Proposition~\ref{prop:symmetry}. The symmetry of the intermediate configuration implies the symmetry of the corresponding difference table.
\begin{proposition}[Symmetry]\label{prop:DiffTableSym}
    For each $i \geq 1$ and $j$, the $j$th leftmost element in row $i$ of $F'(n)$ is the same up to a sign as the $j$th rightmost element in that row: $F'(x,y) = -F'(y,x)$.
\end{proposition}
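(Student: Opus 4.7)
The plan is to derive this directly from the definition of $F'$ together with Proposition~\ref{prop:symmetry}, which states $F(a,b) = F(b,a)$ for all $a,b$. So the proof is essentially a one-line substitution, and no real obstacle is anticipated.

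First I would write out $F'(y,x)$ using the defining identity $F'(x,y) = F(x-1,y) - F(x,y-1)$, obtaining
\[F'(y,x) = F(y-1,x) - F(y,x-1).\]
Next I would apply Proposition~\ref{prop:symmetry} to each term on the right, giving $F(y-1,x) = F(x,y-1)$ and $F(y,x-1) = F(x-1,y)$. Substituting yields
\[F'(y,x) = F(x,y-1) - F(x-1,y) = -\bigl(F(x-1,y) - F(x,y-1)\bigr) = -F'(x,y),\]
which is the desired identity. The restatement in terms of positions in a row (the $j$th leftmost entry versus the $j$th rightmost in row $i$) follows because the pairs $(x,y)$ and $(y,x)$ lie in the same row $i = x+y$ and are equidistant from the center, so the $j$th leftmost entry of row $i$ is $F'(x,y)$ for some $(x,y)$ with $y < x$, and its mirror image across the center is $F'(y,x)$.

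The only minor subtlety to mention is the boundary convention: when $x=0$ or $y=0$, one of the arguments of $F$ leaves the first quadrant, but by our convention $F(x,y)=0$ outside the first quadrant, so the symmetry $F(a,b)=F(b,a)$ still holds and the computation above is unaffected. No other step requires induction or any nontrivial structural fact about the chip-firing process beyond Proposition~\ref{prop:symmetry}.
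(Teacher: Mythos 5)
Your proof is correct and is exactly the argument the paper has in mind: the paper states this proposition without a written proof, remarking only that it follows from the symmetry $F(x,y)=F(y,x)$ of Proposition~\ref{prop:symmetry}, and your two-line substitution is the natural way to make that explicit. Your note about the boundary convention $F(x,y)=0$ outside the first quadrant is a reasonable extra check and does not change anything.
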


It is not surprising that the maximum element in rows of the intermediate configuration is not increasing. The next proposition shows that the same is true for difference tables, explaining that the rows of the intermediate configuration are becoming flatter. Due to symmetry, the maximum absolute value in a row of the difference table is the same as the maximum value on the left side of the row.

\begin{proposition}\label{prop:nonincreasingmax}
Let $n > 2$ and let $i> 1$. The maximum element of the $(i+1)$st row of $F'(n)$ is no greater than that of row $i$ of $F'(n)$.
\end{proposition}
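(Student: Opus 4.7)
The plan is to reduce the proposition to an inequality about a single row of $F$ and then exploit the floor-based firing recurrence. First I would parametrize row $i-1$ of $F$ by $a_y := F(i-1-y, y)$ and row $i$ of $F$ by $b_y := F(i-y, y)$, extending both sequences by zero outside their natural index ranges. Under the firing rule
\[ b_y = \lfloor a_y / 2 \rfloor + \lfloor a_{y-1}/2 \rfloor, \]
the entries of row $i$ of $F'$ are exactly the consecutive differences $d_y := a_y - a_{y-1}$, and the entries of row $i+1$ of $F'$ are $e_y := b_y - b_{y-1}$. Writing $M_i := \max_y d_y$ and $M_{i+1} := \max_y e_y$, the claim becomes $M_{i+1} \le M_i$.

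Next, substituting the firing formula and telescoping gives the clean identity
\[ e_y = \lfloor a_y / 2 \rfloor - \lfloor a_{y-2}/2 \rfloor. \]
The main analytic ingredient would be the integer inequality $\lfloor u/2 \rfloor - \lfloor v/2 \rfloor \le \lceil (u-v)/2 \rceil$, which I would verify by a short parity case check on $u$ and $v$. Applying it with $u = a_y$ and $v = a_{y-2}$, and noting $a_y - a_{y-2} = d_y + d_{y-1}$, gives
\[ e_y \le \bigl\lceil (d_y + d_{y-1})/2 \bigr\rceil. \]
Since $d_y, d_{y-1} \le M_i$ and $M_i$ is an integer, this upper bound is at most $M_i$. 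Hence $e_y \le M_i$ for every $y$, which is the desired inequality.

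The main subtlety will be the boundary cases, where $y \in \{0, 1, i, i+1\}$ forces one of $a_y, a_{y-1}, a_{y-2}$ (or $b_{-1}, b_{i+1}$) to be the zero extension. Under the convention above, the telescoping identity and the floor inequality both remain valid uniformly, and in the $y = 0$ case one only needs the elementary bound $\lfloor a_0/2 \rfloor \le a_0$ together with the observation that $a_0 = F(i-1, 0) = F'(i,0)$ is itself the leftmost entry of row $i$ of $F'$, hence already bounded by $M_i$. I do not expect any other delicate step—the heart of the argument is the telescoping identity for $e_y$ paired with the single floor inequality, and the role of the hypothesis $n > 2$, $i > 1$ is only to ensure the rows under comparison are nontrivial.
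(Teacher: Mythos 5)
Your argument is correct and follows essentially the same route as the paper's: both reduce the claim to the telescoping identity that an entry of row $i+1$ of $F'$ equals $\floor{a_y/2}-\floor{a_{y-2}/2}$ for two entries of row $i-1$ of $F$ that are two apart, and then bound this by a floor/parity estimate against the differences in row $i$ of $F'$. Your packaging via the single inequality $\floor{u/2}-\floor{v/2}\le\ceil{(u-v)/2}$ followed by $\ceil{(d_y+d_{y-1})/2}\le M_i$ is a slightly cleaner organization that absorbs the paper's separate case analysis (equal versus unequal consecutive differences, and the central element where the row changes from increasing to decreasing) into one uniform step.
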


\begin{proof}
We want to show that 
\[F'(x,y) \leq \max\{F'(x-1,y),F'(x,y-1)\}.\]
Due to symmetry, it is enough to look at the left side of the row. Suppose $a \le b \le c$ are three consecutive elements in row $i$ of $F$, which are all on the left side. Then the corresponding row in $F'$ has two consecutive elements $b-a$ and $c-b$.
    
The next row in $F$ has two consecutive elements $\floor{\frac{a}{2}} + \floor{\frac{b}{2}}$ and $\floor{\frac{b}{2}} + \floor{\frac{c}{2}}$. The next row in $F'$ has an element $\floor{\frac{c}{2}} - \floor{\frac{a}{2}}$. We need to show that
    \[\floor{\frac{c}{2}} - \floor{\frac{a}{2}} \le \max\{b-a,c-b\}.\]
    We have
    \[\floor{\frac{c}{2}} - \floor{\frac{a}{2}} \le \frac{c}{2} - \frac{a}{2} + \frac{1}{2}.\]
    Suppose $b-a \ne c-b$, then $\max\{b-a,c-b\} \ge \frac{c-a}{2} + \frac{1}{2}$. If $b-a = c-b$, then $a$ and $c$ have the same parity, and $\floor{\frac{c}{2}} - \floor{\frac{a}{2}} = \frac{c}{2} - \frac{a}{2} = b-a = c-b$. The statement follows. By symmetry, we get the statement when $a \ge b \ge c$.
    
    The only case that is left is when $a < b > c$. This only happens when $b$ is the central element, implying that $a = c$. In this case, 
    \[\floor{\frac{c}{2}} - \floor{\frac{a}{2}} = 0 \le \max\{b-a,c-b\},\]
    finishing the proof.
\end{proof}


Each line of the table of intermediate configuration $F$ behaves similarly to a Gaussian probability density function $f(x) = \frac{1}{\sqrt{2\pi}}e^{-\frac{x^2}{2}}$. Like the Gaussian function for $x < 0$, the entry of $F$ increases as we move rightwards towards the center. In the left half of each row of the difference table $F'(n)$, the entries first weakly increase and then weakly decrease. The behavior of the row of $F'$ is similar to the behavior of the derivative of the Gaussian $f'(x) = -\frac{x}{\sqrt{2\pi}}e^{-\frac{x^2}{2}}$.

\begin{example}
Figure~\ref{fig:threerows_diffs} shows the differences for $n=11$ for the same rows as Figure~\ref{fig:threerows}. The differences for the last row of the top triangle (dashed line) are:
\[1,\ 10,\ 44,\ 110,\ 165,\ 132,\ 0,\ -132,\ -165,\ -110,\ -44,\ -10,\ -1;\]
The differences for the first row of the longest length in the intermediate configuration (black line) are:
\[1,\ 5,\ 12,\ 20,\ 28,\ 36,\ 41,\ 38,\ 27,\ 10,\ -10,\ -27,\ -38,\ -41,\ -36,\ -28,\ -20,\ -12,\ -5,\ -1;\]
the differences for the last row of the middle rectangle and the start of the bottom rectangle (gray line) are:
\[1,\ 2,\ 2,\ 2,\ 2,\ 2,\ 2,\ 2,\ 2,\ 1,\ -1,\ -2,\ -2,\ -2,\ -2,\ -2,\ -2,\ -2,\ -2,\ -1.\]
    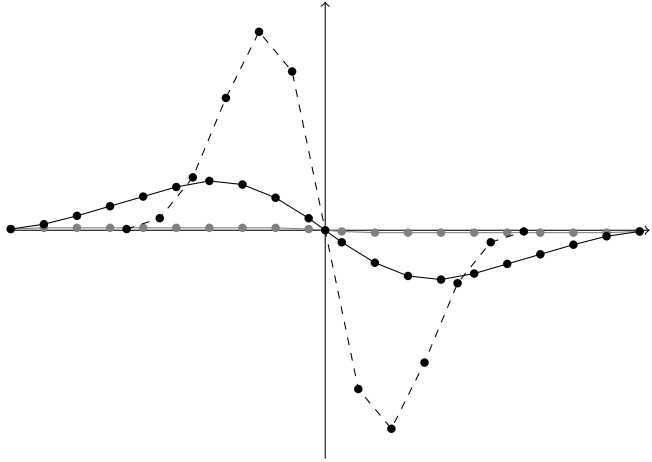
\begin{figure}[ht!]
\centering
\begin{tikzpicture}[scale=0.8,x=0.55cm,y=0.02cm]
  \draw[->] (-0.5,0) -- (18.8,0);
  \draw[->] (9,-190) -- (9,190);


  \foreach \x/\v in {
    -0.5/1,0.5/2,1.5/2,2.5/2,3.5/2,4.5/2,5.5/2,6.5/2,7.5/2,8.5/1,9.5/-1,10.5/-2,11.5/-2,12.5/-2,13.5/-2,14.5/-2,15.5/-2,16.5/-2,17.5/-2,18.5/-1
  }{
    \fill[gray] (\x,\v) circle (2pt);
  }
  \draw[gray]
    (-0.5,1)--(0.5,2)--(1.5,2)--(2.5,2)--(3.5,2)--(4.5,2)--(5.5,2)--(6.5,2)--(7.5,2)--(8.5,1)--
    (9.5,-1)--(10.5,-2)--(11.5,-2)--(12.5,-2)--(13.5,-2)--(14.5,-2)--(15.5,-2)--(16.5,-2)--(17.5,-2)--(18.5,-1);

  \foreach \x/\v in {
    -0.5/1,0.5/5,1.5/12,2.5/20,3.5/28,4.5/36,5.5/41,6.5/38,7.5/27,8.5/10,9.5/-10,10.5/-27,11.5/-38,12.5/-41,13.5/-36,14.5/-28,15.5/-20,16.5/-12,17.5/-5,18.5/-1
  }{
    \fill (\x,\v) circle (2pt);
  }
  \draw
    (-0.5,1)--(0.5,5)--(1.5,12)--(2.5,20)--(3.5,28)--(4.5,36)--(5.5,41)--(6.5,38)--(7.5,27)--(8.5,10)--
    (9.5,-10)--(10.5,-27)--(11.5,-38)--(12.5,-41)--(13.5,-36)--(14.5,-28)--(15.5,-20)--(16.5,-12)--(17.5,-5)--(18.5,-1);

  \foreach \x/\v in {
  3.0/1,4.0/10,5.0/44,6.0/110,7.0/165,8.0/132,9.0/0,10.0/-132,11.0/-165,12.0/-110,13.0/-44,14.0/-10,15.0/-1
  }{
    {
    \fill (\x,\v) circle (2pt);
  }
  }
  \draw[black,dashed]
    (3.0,1)--(4.0,10)--(5.0,44)--(6.0,110)--(7.0,165)--(8.0,132)--(9.0,0)--(10.0,-132)--(11.0,-165)--(12.0,-110)--(13.0,-44)--(14.0,-10)--(15.0,-1);
\end{tikzpicture}
\caption{First differences of the three sequences in Figure~\ref{fig:threerows}.}
\label{fig:threerows_diffs}
\end{figure}
\end{example}

\begin{remark}
    One can observe that the entries do not necessarily first strictly increase and afterwards strictly decrease. To see this, consider the table of differences of consecutive entries in the difference table $F'(9)$, which we illustrate in Figure~\ref{fig:plusminustable} where bold black dots signify increases in entry, hollow dots signify two consecutive entries that are equal, and gray dots denote decreases in entries. This is analogous to viewing the sign of the second derivative of the probability density function $f$. The hollow dots in this figure correspond to two consecutive black dots in Figure~\ref{fig:testingn9}.
\end{remark}

\begin{figure}[ht!]
    \centering    \includegraphics[width=1.0\linewidth]{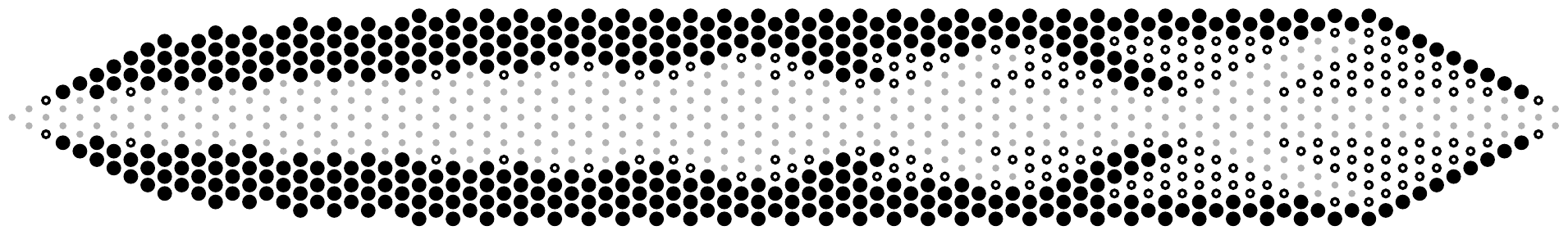}
    \caption{Illustration of the sign of differences between entries in $F'(9)$. Bold black dots signify increases in entry in $F'(9)$, hollow dots signify two consecutive entries that are equal, and gray dots signify decreases in entries.}
    \label{fig:plusminustable}
\end{figure}

The following lemma shows that if three terms in the difference table weakly increase/decrease, then the two terms below also weakly increase/decrease.

\begin{lemma}\label{lem:IncreasingDiffTableSecondDeriv}
Suppose we have $F'(x_0, y_0+1) \leq F'(x_0-1, y_0+2) \leq F'(x_0-2, y_0+3)$, then $F'(x_0, y_0+2) \leq F'(x_0-1, y_0+3)$. Similarly, if $F'(x_0, y_0+1) \ge F'(x_0-1, y_0+2) \ge F'(x_0-2, y_0+3)$, then $F'(x_0, y_0+2) \ge F'(x_0-1, y_0+3)$.
\end{lemma}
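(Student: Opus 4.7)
\medskip
\noindent\emph{Proof plan.} The plan is to reduce the statement to a purely arithmetic inequality about four consecutive entries on a single row of the intermediate configuration $F$, and then carry out a short case analysis based on parities.

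Concretely, I would first set
\[
a=F(x_0,y_0),\quad b=F(x_0-1,y_0+1),\quad c=F(x_0-2,y_0+2),\quad d=F(x_0-3,y_0+3),
\]
which are four consecutive entries on row $x_0+y_0$ of $F$. Using the definition $F'(x,y)=F(x-1,y)-F(x,y-1)$, the hypothesis rewrites as
\[
b-a\le c-b\le d-c,\qquad\text{equivalently}\qquad a+c\ge 2b \text{ and } b+d\ge 2c.
\]
Using the row-by-row recurrence $F(x,y)=\floor{F(x-1,y)/2}+\floor{F(x,y-1)/2}$ (valid uniformly thanks to the convention $F\equiv 0$ outside the first quadrant), the middle $\floor{b/2}$ terms cancel and one computes
\[
F'(x_0,y_0+2)=\floor{c/2}-\floor{a/2},\qquad F'(x_0-1,y_0+3)=\floor{d/2}-\floor{b/2}.
\]
Thus the lemma reduces to the numerical claim $\floor{a/2}+\floor{d/2}\ge \floor{b/2}+\floor{c/2}$.

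Writing each of $a,b,c,d$ as $x=2\floor{x/2}+\epsilon_x$ with $\epsilon_x\in\{0,1\}$, the desired inequality becomes, after multiplying by $2$, the condition $E+P\ge 0$, where $E:=a+d-b-c$ and $P:=\epsilon_b+\epsilon_c-\epsilon_a-\epsilon_d\in\{-2,-1,0,1,2\}$. Summing the two hypothesis inequalities immediately gives $E=(a+c-2b)+(b+d-2c)\ge 0$, which handles every case with $P\ge 0$. For $P=-1$ I would observe that $E$ has the same parity as $\epsilon_a+\epsilon_b+\epsilon_c+\epsilon_d\equiv P\pmod 2$, which is odd, so $E\ge 1$ and $E+P\ge 0$.

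The main obstacle is the case $P=-2$, i.e. $a,d$ odd and $b,c$ even, where the bare bound $E\ge 0$ only yields $E+P\ge -2$. The fix is to exploit parity inside the hypothesis itself: under this pattern, $a+c-2b$ is odd and nonnegative, hence at least $1$, and similarly $b+d-2c\ge 1$; adding these two strengthened bounds yields $E\ge 2$ and therefore $E+P\ge 0$. Finally, the decreasing half of the lemma either follows by rerunning the same argument with all inequalities reversed, or more cleanly by invoking the antisymmetry $F'(x,y)=-F'(y,x)$ from Proposition~\ref{prop:DiffTableSym} to reduce it to the increasing case just established.
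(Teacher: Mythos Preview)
Your argument is correct, and the opening setup---naming the four consecutive entries $a,b,c,d$ on row $x_0+y_0$ of $F$ and computing the two target differences as $\floor{c/2}-\floor{a/2}$ and $\floor{d/2}-\floor{b/2}$---is exactly the paper's starting point. Where you diverge is in how you finish: you prove $\floor{a/2}+\floor{d/2}\ge\floor{b/2}+\floor{c/2}$ by a parity case split on $(\epsilon_a,\epsilon_b,\epsilon_c,\epsilon_d)$, whereas the paper avoids cases altogether by sandwiching both target quantities against the middle increment $d_2:=c-b$. Writing $d_1=b-a$ and $d_3=d-c$, from $d_1\le d_2$ alone one gets
\[
\floor{\tfrac{c}{2}}-\floor{\tfrac{a}{2}}=\floor{\tfrac{a+d_1+d_2}{2}}-\floor{\tfrac{a}{2}}\le\floor{\tfrac{a+2d_2}{2}}-\floor{\tfrac{a}{2}}=d_2,
\]
and from $d_3\ge d_2$ alone the symmetric computation gives $\floor{d/2}-\floor{b/2}\ge d_2$. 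This buys you a proof with no parity bookkeeping, and the reversed-inequality half is then literally the same two lines with $\le$ and $\ge$ swapped. Your route works just as well; it simply trades one clean monotonicity step for three easy parity cases.

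One small correction: your closing remark that the antisymmetry $F'(x,y)=-F'(y,x)$ from Proposition~\ref{prop:DiffTableSym} reduces the decreasing case to the increasing case is not right. The reflection $(x,y)\mapsto(y,x)$ reverses the left-to-right order along a row \emph{and} negates the values, and the two effects cancel: an increasing triple of $F'$-values is sent to another increasing triple at the mirrored location, not to a decreasing one. Your other option---rerunning the argument with all inequalities reversed---is the correct one, and it is what the paper does as well.
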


\begin{proof}
To de-clutter the proof, let us denote $F(x_0,y_0)$ by $a$, $F'(x_0, y_0+1)$ by $d_1$, $F'(x_0-1, y_0+2)$ by $d_2$, and $F'(x_0-2, y_0+3)$ by $d_3$. We have $d_1 = F'(x_0, y_0+1) = F(x_0-1, y_0+1) - F(x_0,  y_0)$, $d_2 = F'(x_0-1, y_0+2) = F(x_0-2, y_0+2) - F(x_0-1, y_0+1)$, and $d_3 = F'(x_0-2, y_0+3) = F(x_0-3, y_0+3) - F(x_0-2, y_0+2)$. In the first part of the lemma, we are given that $d_1 \le d_2 \le d_3$.

We have
\[F(x_0-1, y_0+1) = a + d_1,\]
\[F(x_0-2, y_0+2) = a + d_1 + d_2,\]
\[F(x_0-3, y_0+3) = a + d_1 + d_2 + d_3.\]
Therefore,
\[F'(x_0, y_0+2) = \floor{\frac{a + d_1 + d_2}{2}} - \floor{\frac{a}{2}} \le \floor{\frac{a + 2d_2}{2}} - \floor{\frac{a}{2}} = d_2\]
and
\[F'(x_0-1, y_0+3) = \floor{\frac{a + d_1 + d_2 + d_3}{2}} - \floor{\frac{a+d_1}{2}} \ge \floor{\frac{a + d_1 + 2d_2}{2}} - \floor{\frac{a+d_1}{2}} = d_2.\]
The first part of the proposition follows. The second part follows by symmetry.
\end{proof}

\begin{theorem}
    In the left half of the difference table, the entries weakly increase and then weakly decrease.
\end{theorem}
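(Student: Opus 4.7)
The plan is to proceed by induction on the row index $r$, showing that the left half $e_0^{(r)}, e_1^{(r)}, \ldots, e_L^{(r)}$ of row $r$ of $F'$ (where $e_k^{(r)} := F'(r-k, k)$ and $L := \lceil r/2 \rceil - 1$) is unimodal: weakly increases up to some peak index $j$, then weakly decreases. The base cases $r \le 4$ are immediate since the left half contains at most two entries.

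For the inductive step, assume the left half of row $r$ is unimodal with peak at $j$. Each consecutive pair $(e_k^{(r+1)}, e_{k+1}^{(r+1)})$ in the left half of row $r+1$ corresponds, via the identification used in the proof of Lemma~\ref{lem:IncreasingDiffTableSecondDeriv}, to the triple $(e_{k-1}^{(r)}, e_k^{(r)}, e_{k+1}^{(r)})$ in row $r$; I would adopt the convention $e_{-1}^{(r)} := 0$, consistent with $F' \equiv 0$ outside the first quadrant. The key observation is that this triple is weakly monotone for every $k \ne j$, which I would verify case by case: if the triple lies entirely inside the left half of row $r$, monotonicity follows from the inductive hypothesis; if the triple extends one step past the left half (which happens only for even $r$ at $k = L$, making $e_{k+1}^{(r)} = 0 < e_k^{(r)}$), the final step is decreasing, so the whole triple is weakly decreasing whenever the first step was also decreasing, i.e., for every $k \ne j$; and at the left boundary $k = 0$, the extended triple $(0, e_0^{(r)}, e_1^{(r)})$ is weakly increasing whenever $j \ge 1$.

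Applying Lemma~\ref{lem:IncreasingDiffTableSecondDeriv} to each monotone triple then forces $e_k^{(r+1)} \le e_{k+1}^{(r+1)}$ for every $k \le j - 1$ in the left half of row $r+1$ and $e_k^{(r+1)} \ge e_{k+1}^{(r+1)}$ for every $k \ge j + 1$. Only at $k = j$ is the sign of the consecutive difference not forced, so the sign pattern of successive differences switches from weakly positive to weakly negative at most once, which is precisely unimodality.

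The main obstacle will be verifying that Lemma~\ref{lem:IncreasingDiffTableSecondDeriv} still applies at the leftmost position under the zero-extension convention: re-running the proof of the lemma with $a = F(r+1, -2) = 0$ and $d_1 = 0$ reduces the desired conclusion to $\floor{e_0^{(r)}/2} \le \floor{(e_0^{(r)} + e_1^{(r)})/2}$, which holds whenever $e_1^{(r)} \ge 0$, and this is guaranteed by the inductive hypothesis for every $r \ge 2$. A secondary subtlety arises in the degenerate case $j = 0$ or $j = L$, in which the unforced index lies at an endpoint of the left half; here the left half of row $r+1$ is simply weakly monotone throughout, still a special case of unimodality, and so no separate treatment is required.
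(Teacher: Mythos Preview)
Your proposal is correct and follows essentially the same route as the paper: both argue by induction on the row, pad the left end of the row with zeros, and use Lemma~\ref{lem:IncreasingDiffTableSecondDeriv} to push each weakly monotone triple in row $r$ down to a weakly monotone pair in row $r+1$. Your write-up is in fact more explicit than the paper's---you keep track of the peak index $j$, isolate the single ``unforced'' step at $k=j$, and separately verify the right boundary where (for even $r$) the last triple reaches the center entry $0$---whereas the paper simply asserts that the zero-padded row inherits the increase-then-decrease shape. One small imprecision: in the degenerate case $j=0$ you say the left half of row $r+1$ is ``weakly monotone throughout,'' but in fact the unforced first step can still go up while all later steps go down, giving a peak at index $1$; this is of course still unimodal, so the conclusion stands.
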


\begin{proof}
We prove this via induction on row. 

For the base case, we look at row $0$ and directly observe that the statement is true for that row.

We can consider our rows to be infinite; we can assume that they start with zeros and end with zeros. Adding zeros at the beginning does not change the property that the row starts as weakly decreasing because nonzero terms at the beginning of the row are positive. For our inductive case, we suppose that the left half of row $i$ of the difference table first increases weakly, then decreases weakly. We observe that from Lemma~\ref{lem:IncreasingDiffTableSecondDeriv}, if we have three consecutive weakly increasing entries in the difference table, then, directly below, we have two consecutive weakly increasing entries in the next row.  Similarly, if we have three consecutive weakly decreasing entries in the difference table, then, directly below, we have two consecutive weakly decreasing entries in the next row. Using this fact and how the difference table first increases weakly and then decreases weakly in the left half of row $i$, we know that the same applies for row $i+1$.
\end{proof}

\section{Acknowledgments} 

The authors thank Professor Alexander Postnikov for suggesting the topic of chip-firing on posets, for helping formulate the project, and for helpful discussions. The MIT Department of Mathematics financially supports the first and second authors. The third author is supported by Harvard College.

All figures in this paper were generated using TikZ via \url{mathcha.io} and exported as PDF.
\bibliographystyle{unsrt}
\bibliography{biblio5.bib}

\smallskip

\noindent
Ryota Inagaki \\
\textsc{
Department of Mathematics, Massachusetts Institute of Technology\\
77 Massachusetts Avenue, Building 2, Cambridge, Massachusetts, U.S.A. 02139}\\
\textit{E-mail address: }\texttt{inaga270@mit.edu}
\medskip

\noindent
Tanya Khovanova \\
\textsc{
Department of Mathematics, Massachusetts Institute of Technology\\
77 Massachusetts Avenue, Building 2, Cambridge, MA, U.S.A. 02139}\\
\textit{E-mail address: }\texttt{tanyakh@yahoo.com}
\medskip

\noindent
Austin Luo \\
\textsc{
Harvard College,\\
1 Oxford Street,
Cambridge, MA 02138}\\
\textit{E-mail address: }\texttt{aluo@college.harvard.edu}
\medskip

\end{document}